\newcommand{\cM}{\mathcal{M}\textnormal{f}}
\newcommand{\cN}{\mathcal{N}}
\newcommand{\St}{\mathcal{S}t_h}
\newcommand{\Af}{\mathbb{A}}
\theoremstyle{plain}
\newtheorem{theorem}{Theorem}[section]
\newtheorem{corollary}[theorem]{Corollary}
\newtheorem{proposition}[theorem]{Proposition}
\newtheorem{lemma}[theorem]{Lemma}
\theoremstyle{definition}
\newtheorem{definition}[theorem]{Definition}
\theoremstyle{remark}
\newtheorem{remark}[theorem]{Remark}
\newtheorem{example}[theorem]{Example}
\begin{document}

\title{Flat families by strongly stable ideals and a generalization of Gr\"obner bases}

\author[F.Cioffi]{Francesca Cioffi}
\address{Dipartimento di Matematica e Applicazioni dell'Universit\`{a} di Napoli Federico II,\\   via Cintia, 80126  Napoli, Italy
         }

\author[M.Roggero]{Margherita Roggero}
\address{Dipartimento di Matematica dell'Universit\`{a} di Torino\\ 
         Via Carlo Alberto 10, 
         10123 Torino, Italy}

\keywords{Family of schemes, strongly stable ideal, Gr\"obner basis, flatness}
\subjclass[2010]{14D05, 14Q20, 13P10 }

\begin{abstract}
Let $J$ be a strongly stable monomial ideal in $S=K[x_0,\ldots,x_n]$ and let $\cM(J)$ be the family of all homogeneous ideals $I$ in $S$ such that the set of all terms outside $J$ is a $K$-vector basis of the quotient $S/I$. We show that an ideal $I$ belongs to $\cM(J)$ if and only if it is generated by a special set of polynomials, the $J$-marked basis of $I$, that in some sense generalizes the notion of reduced Gr\"obner basis and its constructive capabilities. Indeed, although not every $J$-marked basis is a Gr\"obner basis with respect to some term order, a sort of reduced form modulo $I\in \cM(J)$ can be computed for every homogeneous polynomial, so that a $J$-marked basis can be characterized by a Buchberger-like criterion.
Using $J$-marked bases, we prove that the family $\cM(J)$ can be endowed, in a very natural way, with a structure of affine scheme that turns out to be homogeneous with respect to a non-standard grading and flat in the origin (the point corresponding to $J$), thanks to properties of $J$-marked bases analogous to those of Gr\"obner bases about syzygies. 
\end{abstract}

\maketitle




\section*{Introduction}

Let $J$ be any monomial ideal in the polynomial ring $S:=K[x_0,\ldots,x_n]$ in $n+1$ variables such that $x_0 < x_1 < \ldots < x_n$ and let us denote by $\cN(J)$ the set of terms outside $J$. In this paper we consider the family $\cM(J)$ of ideals $I$ of $S$ such that $S=I\oplus \langle \cN (J) \rangle$ as a $K$-vector space and investigate under which conditions this family is in some natural way an algebraic scheme. If $\cN(J)$ is not finite, the family of such ideals can be too large. For instance, if $J=(x_0)\subset K[x_0,x_1]$, the family of all ideals $I$ such that $S/I$ is generated by $\cN(J)=\{x_1^n : n\in \mathbb N\}$ depends on infinitely many parameters because the set $\cN(J)$ has infinite cardinality. Thus, we restrict ourselves to the homogeneous case, so that for every degree $d$ the factor $S_d/I_d\cong (S/I)_d$ is a vector space of finite dimension.  

To study the family $\cM(J)$ we introduce a set of particular homogeneous polynomials, called $J$-marked set, that becomes a $J$-marked basis when it generates an ideal $I$ that belongs to $\cM(J)$. If $J$ is strongly stable, a $J$-marked basis satisfies most of the good properties of a reduced homogeneous Gr\"obner basis and, for this reason, we assume that $J$ is strongly stable. However, even under this assumption, a $J$-marked basis does not need to be a Gr\"obner basis (Example \ref{noGBasis}). We show that a suitable rewriting procedure allows us to compute a sort of reduced forms and to recognize a $J$-marked basis by a Buchberger-like criterion. This criterion is the tool by which we construct the family $\cM(J)$ following the line of the computation of a Gr\"obner stratum, that is the family of all ideals that have $J$ as initial ideal with respect to a fixed term order. In the last years, several authors have been working on Gr\"obner strata, proving that they have!
  a natural and well defined structure of algebraic schemes, that results from a procedure based on Buchberger's algorithm \cite{CF,LR,NS,Ro,RT}, and that they are homogeneous with respect to a non standard positive grading over $\mathbb Z^{n+1}$ \cite{FR}. In this context, it is worth also to recall that \cite{LY} describe a method to compute all liftings of a homogeneous ideal with an approach different from, but close to the method applied to study Gr\"obner strata. 

The paper is organized in the following way. In section 1 we give definitions and basic properties of $J$-marked sets and bases, with several examples. In section 2, under the hypothesis that $J$ is strongly stable, we prove the existence of a sort of reduced form, modulo the ideal generated by a $J$-marked set, for every homogeneous polynomial (Theorem \ref{$J$-normal form}). A consequence is that, if $J$ is strongly stable, a $J$-marked set $G$ is a $J$-marked basis if and only if $J$ and the ideal generated by $G$ share the same Hilbert function (Corollaries \ref{generatore} and \ref{characterization}). From now we suppose that $J$ is strongly stable and in section 3 define a total order (Definitions \ref{order1} and \ref{order2}) on some special polynomials and give an algorithm to compute our reduced forms by a rewriting procedure. This computation opens the access to effective methods for $J$-marked bases, such as a Buchberger-like criterion (Theorem \ref{Buchberger}) !
 that recognizes when a $J$-marked set is a $J$-marked basis $G$, also allowing to lift syzygies of $J$ to syzygies of $G$. 

In section 4 we study the family $\cM(J)$, computing it by the Buchberger-like criterion and showing that there is a bijective correspondence between the ideals of $\cM(J)$ and the points of an affine scheme (Theorem \ref{affine scheme}). A possible objection to our construction is that it depends on a procedure of reduction, which is not unique in general. For this reason we show that $\cM(J)$ has a structure of an affine scheme, that is given by the ideal generated by minors of some matrices and that is homogeneous with respect to a non-standard grading over the additive group $\mathbb Z^{n+1}$ (Lemma \ref{idealeU} and Theorem \ref{homogeneous}). Moreover, we note that $\cM(J)$ is flat in $J$ and that the Castelnuovo-Mumford regularity of every ideal $I\in\cM(J)$ is bounded from above by the Castelnuovo-Mumford regularity of $J$ (Proposition \ref{flat}). In the Appendix, over a field $K$ of characteristic zero, we give an explicit computation of a family $\cM(J)$ which is !
 scheme-theoretically isomorphic to a locally closed subset of the Hilbert scheme of $8$ points in $\mathbb P^2$ (see also \cite{BLR}). We note that it strictly contains the union of all Gr\"obner strata with $J$ as initial ideal and that it is not isomorphic to an affine space, even though the point corresponding to $J$ is smooth.

We refer to \cite{Bu,KR,MM,SPES2} for definitions and results about Gr\"obner bases, in particular to \cite{Mo,Sc} for the approach we follow, and to \cite{Va} for definitions and results about Hilbert functions of standard graded algebras.

A preliminary version of this paper has been written and posed at arXiv:1005.0457 by the second author.


\section{Generators of a quotient $S/I$ and generators of $I$}

In this section we investigate relations among generators of a homogeneous ideal $I$ of $S$ and generators of the quotient $S/I$, under some fixed conditions on generators of $S/I$. 

For every integer $m\geq 0$, the $K$-vector space of all homogeneous polynomials of degree $m$ of $I$ is denoted by $I_m$. The {\em initial degree} of an ideal $I$ is the integer $\alpha_I:=\min\{m\in \mathbb N : I_m\not=0\}$.

We will denote by $x^\alpha = x_0^{\alpha_0} \ldots x_n^{\alpha_n}$ any term in $S$, $\vert\alpha\vert$ is its degree, and we say that $x^\alpha$ divides $x^\beta$ (for short $x^\alpha \vert x^\beta$) if there exists a term $x^\gamma$ such that $x^\beta = x^\alpha x^\gamma$. For every term $x^\alpha\not= 1$ we set $\min(x^\alpha)= \min\{x_i : x_i \vert x^\alpha\}$ and $\max(x^\alpha)= \max\{x_i : x_i \vert x^\alpha\}$.

\begin{definition}\label{def:supp}
The support $Supp(h)$ of a polynomial $h$ is the set of terms that occur in $h$ with non-zero coefficients. 
\end{definition}

If $J$ is a monomial ideal, $B_J$ denotes its (minimal) monomial basis and $\cN (J)$ its \emph{sous-escalier}, that is the set of terms outside $J$. For every polynomial $f$ of $J$, we get $Supp(f)\cap \cN (J)=\emptyset$. 

\begin{definition}\label{def:normalform}
Given a monomial ideal $J$ and an ideal $I$, a {\em $J$-reduced form modulo $I$} of a polynomial $h$ is a polynomial $h_0$ such that $h-h_0\in I$ and $Supp(h_0) \subseteq \cN (J)$. 
\end{definition}

If $I$ is homogeneous, the $J$-reduced form modulo $I$ of a homogeneous polynomial $h$  is supposed to be homogeneous too.

\begin{definition} \cite{RS}
A {\em marked polynomial} is a polynomial $f\in S$ together with a specified term of $Supp(f)$ that will be called {\em head term of $f$} and denoted by $Ht(f)$. 
\end{definition} 

\begin{definition}\label{def:Jbasis}
A finite set $G$ of homogeneous marked polynomials $f_\alpha=x^\alpha-\sum c_{\alpha\gamma} x^\gamma$, with  $Ht(f_\alpha)=x^\alpha$, is called {\em $J$-marked set} if the head terms $Ht(f_\alpha)$ are pairwise different and form the monomial basis $B_J$ of a monomial ideal $J$ and every $x^\gamma$ belongs to $\cN(J)$, so that  $\vert Supp(f)\cap J \vert =1$. A $J$-marked set $G$ is a  $J$\emph{-marked basis} if $\cN(J)$ is a basis of $S/(G)$ as a $K$-vector space, i.e. $S=(G)\oplus \langle \cN (J) \rangle$ as a $K$-vector space.
\end{definition}

\begin{remark}\label{$J$-marked basis}
The ideal $(G)$ generated by a $J$-marked basis $G$ has the same Hilbert function as $J$, hence $dim_K J_m = dim_K (G)_m$ for every $m\geq 0$, by the definition of $J$-marked basis. 
\end{remark}

\begin{definition}\label{def:Mf}
The family of all homogeneous ideals $I$ such that $\cN(J)$ is a basis of the quotient $S/I$ as a $K$-vector space will be denoted by $\cM(J)$ and called {\em $J$-marked family}. 
\end{definition} 

\begin{remark}\label{rem:first}
(1) If $I$ belongs to $\cM(J)$, then $I$ contains a $J$-marked set.

(2) A $J$-marked family $\cM(J)$ contains every homogeneous ideal having $J$ as initial ideal with respect to some term order, but it can also contain other ideals, as we will see in Example~\ref{noGBasis}. 
\end{remark}

\begin{proposition}\label{first properties}
Let $G$ be a J-marked set. The following facts are equivalent:

(i) $G$ is a J-marked basis;

(ii) the ideal $(G)$ belongs to $\cM(J)$;

(iii) every polynomial $h$ of $S$ has a unique J-reduced form modulo $(G)$.
\end{proposition}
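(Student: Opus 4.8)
The plan is to prove the three-way equivalence by a cyclic argument $(i)\Rightarrow(iii)\Rightarrow(ii)\Rightarrow(i)$, since $(i)\Leftrightarrow(ii)$ is almost immediate from the definitions. Recall that $G$ being a $J$-marked \emph{set} already gives us, for each $x^\alpha\in B_J$, a relation $x^\alpha\equiv\sum c_{\alpha\gamma}x^\gamma$ modulo $(G)$ with all $x^\gamma\in\cN(J)$; so the real content is the interplay between \emph{existence} and \emph{uniqueness} of $J$-normal forms and the direct-sum decomposition $S=(G)\oplus\langle\cN(J)\rangle$.

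First I would prove $(i)\Rightarrow(iii)$. Assume $G$ is a $J$-marked basis, i.e.\ $S=(G)\oplus\langle\cN(J)\rangle$. Given $h\in S$, write $h=g+r$ with $g\in(G)$ and $r\in\langle\cN(J)\rangle$; then $r$ is a $J$-normal form of $h$ modulo $(G)$ (indeed $h-r=g\in(G)$ and $Supp(r)\subseteq\cN(J)$). This settles existence. For uniqueness, if $r,r'$ are two $J$-normal forms modulo $(G)$ of $h$, then $r-r'\in(G)$ and $Supp(r-r')\subseteq\cN(J)$, so $r-r'\in(G)\cap\langle\cN(J)\rangle=\{0\}$, whence $r=r'$.

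Next I would prove $(iii)\Rightarrow(ii)$, i.e.\ that unique $J$-normal forms force $\cN(J)$ to be a $K$-basis of $S/(G)$. Since $G$ is a $J$-marked set, the classes of the terms in $\cN(J)$ span $S/(G)$: working degree by degree and using the head-term relations, every term in $J$ reduces modulo $(G)$ to a $K$-combination of terms in $\cN(J)$ (this is the standard "rewriting" argument on monomials, replacing a multiple of some $x^\alpha\in B_J$ by $x^\gamma$-terms; in the homogeneous setting each step lowers nothing in degree but the usual Noetherian/ordering argument on $B_J$ terminates — alternatively one can invoke that $\langle\cN(J)\rangle+(G)=S$ which is clear since $S=\langle\cN(J)\rangle\oplus J$ as vector spaces and each generator of $J$ is congruent mod $(G)$ to an element of $\langle\cN(J)\rangle$). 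For linear independence: suppose $\sum_\gamma a_\gamma x^\gamma\in(G)$ with $x^\gamma\in\cN(J)$. Then $0$ and $\sum a_\gamma x^\gamma$ are both $J$-normal forms modulo $(G)$ of the polynomial $\sum a_\gamma x^\gamma$ (the latter because it has support in $\cN(J)$ and differs from itself by $0\in(G)$; the former because it differs from $\sum a_\gamma x^\gamma$ by an element of $(G)$ and has empty, hence $\cN(J)$-contained, support). By the assumed uniqueness, $\sum a_\gamma x^\gamma=0$, so all $a_\gamma=0$. Hence the classes of $\cN(J)$ are linearly independent in $S/(G)$, and combined with spanning, $\cN(J)$ is a basis, i.e.\ $(G)\in\cM(J)$, which is exactly condition $(ii)$.

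Finally $(ii)\Rightarrow(i)$ is just unwinding Definition~\ref{def:Jbasis}: if $(G)\in\cM(J)$ then $\cN(J)$ is a $K$-basis of $S/(G)$, which is by definition what it means for the $J$-marked set $G$ to be a $J$-marked basis. I expect the only mildly delicate point to be the \emph{spanning} part of $(iii)\Rightarrow(ii)$ — making sure that every term of $J$ really does rewrite, modulo $(G)$, into $\langle\cN(J)\rangle$ using only the head-term relations of the finitely many generators in $B_J$. In the graded situation this is harmless: fix a degree $m$; the head-term relations express each minimal generator's degree-$m$ multiples in terms of $\cN(J)$, and any monomial of $J_m$ is divisible by some $x^\alpha\in B_J$, so a single substitution already lands the term in $\langle\cN(J)\rangle_m$ plus $(G)$. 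Thus no iteration or termination subtlety arises, and the cycle of implications closes.
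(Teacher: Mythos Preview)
Your cycle $(i)\Rightarrow(iii)$ and $(ii)\Rightarrow(i)$ is fine, and your uniqueness half of $(iii)\Rightarrow(ii)$ is correct. The gap is in your \emph{spanning} argument for $(iii)\Rightarrow(ii)$.

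You try to deduce that $\cN(J)$ spans $S/(G)$ from the mere fact that $G$ is a $J$-marked set, via rewriting. This is false in general: see Example~\ref{es1-2-3}(i), where $G=\{xy+yz,\ z^2+xz\}$ is a $J$-marked set for $J=(xy,z^2)$, yet $\cN(J)$ does \emph{not} span $S/(G)$ (for instance $xyz$ is not congruent modulo $(G)$ to any combination of $x^3,y^3,x^2z,y^2z$). Your ``single substitution'' claim is exactly where this fails: replacing $x^\beta=x^\delta x^\alpha$ by $x^\delta\sum c_{\alpha\gamma}x^\gamma$ produces terms $x^\delta x^\gamma$ that may again lie in $J$, not in $\cN(J)$; in the example, $xyz\mapsto -yz^2\mapsto xyz$ loops. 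Your alternative argument (``each generator of $J$ is congruent mod $(G)$ to an element of $\langle\cN(J)\rangle$'') has the same defect: it handles only the monomials in $B_J$, not the whole vector space $J$. The paper devotes Theorem~\ref{$J$-normal form} to establishing spanning, and needs $J$ strongly stable for it.

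The fix is immediate and makes the proposition the tautology the paper treats it as: hypothesis $(iii)$ already \emph{includes} existence of a $J$-normal form for every $h$, and existence is precisely the statement $S=(G)+\langle\cN(J)\rangle$. So spanning comes for free from $(iii)$; you need not (and cannot) derive it from the $J$-marked-set structure alone.
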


\begin{proof}
This follows by the definition of $J$-marked basis.
\end{proof}

\begin{remark}\label{rem:unique}
A $J$-marked basis is unique for the ideal that it generates, by the unicity of $B_J$ and of the $J$-reduced forms of monomials. So, when the ideal $I$ has a $J$-marked bases $G$, the unique $J$-reduced form modulo $I$ can be also called {\em $J$-normal form modulo $I$}.
\end{remark}

In next examples we will see that not every $J$-marked set $G$ is also a $J$-marked basis, even when $(G)$ and $J$ share the same  Hilbert function. Moreover, it can happen that a $J$-marked set $G$ is not a $J$-marked basis, although there exists an ideal $I$ containing $G$ but not generated by $G$ such that $\cN(J)$ is a $K$-basis for $S/I$. 

\begin{example}\label{es1-2-3} 
(i) In $K[x,y,z]$ let $J=(xy,z^2)$ and $I$ be the ideal generated by $f_1=xy+yz, f_2=z^2+xz$, which form a $J$-marked set. Note that $J$ defines a $0$-dimensional subscheme in $\mathbb P^2$, while $I$ defines a $1$-dimensional subscheme, because it contains the line $x+z=0$. Therefore, $I$ and $J$ do not have the same Hilbert function, so that $\{ f_1,f_2 \}$ is not a $J$-marked basis by Remark~\ref{$J$-marked basis}.

(ii) In $K[x,y,z]$, let $J=(xy,z^2)$ and $I$ be the ideal generated by $g_1=xy+x^2-yz, g_2=z^2+y^2-xz$, which form a $J$-marked set. Note that $J$ and $I$ have the same Hilbert function because they are both complete intersections of two quadrics. However, $\cN (J)$ is not free in $K[x,y,z]/I$ because $zg_1+yg_2=x^2z+y^3 \in I$ is a sum of terms in $\cN (J)$. Hence $\{g_1,g_2\}$ is not a $J$-marked basis.

(iii) In $K[x,y,z]$, let $J=(xy,z^2)$ and $I$ be the ideal generated by $f_1=xy+yz, f_2=z^2+xz, f_3=xyz$. Both $I$ and $J$ define $0$-dimensional subschemes in $\mathbb P^2$ of degree $4$. Moreover, $I$ belongs to $\cM(J)$ because for every $m\geq 2$ the $K$-vector space $U_m=I_m+\cN (J)_m=I_m+\langle x^m,y^m,x^{m-1}z, y^{m-1}z \rangle$ is equal to $K[x,y,z]_m$. This is obvious for $m=2$. Assume $m\geq 3$. Then, $U_m$ contains all the terms $y^{m-i}z^i$, because  $yz^2=zf_1-f_3$ belongs to $I$. Moreover $U_m$ contains all the terms $x^{m-i}y^i$ because $x^2y=xf_1-f_3\in I$ and $xy^{m-1}=y^{m-2}f_1-zy^{m-1}\in U_m$. Finally, by induction on $i$, we can see that all the terms $x^{i}z^{m-i}$ belong to $U_m$. Indeed, as already proved, $z^m$ belongs to $U_m$, hence $x^{i-1}z^{m-i+1}\in U_m$ implies $x^{i}z^{m-i}=x^{i-1}z^{m-i-1}f_2-x^{i-1}z^{m-i+1}\in U_m$. 
However, the $J$-marked set $G=\{f_1,f_2\}$ does not generate $I$ and is not a $J$-marked basis, as shown in (i). 
\end{example}


\section{Strongly stable ideals $J$ and $J$-marked bases}

In this section we show that the properties of $J$-marked sets improve decisively if $J$ is strongly stable. 

Recall that a monomial ideal $J$ is strongly stable if and only if, for every $x_0^{\alpha_0} \ldots x_n^{\alpha_n}$ in $J$, also the term $x_0^{\alpha_0}\ldots x_i^{\alpha_i-1} \ldots x_j^{\alpha_j+1}\ldots x_n^{\alpha_n}$ belongs to $J$, for each $0\leq i <j\leq n$ with $\alpha_i>0$, or, equivalently, for every $x_0^{\beta_0} \ldots x_n^{\beta_n}$ in $\cN(J)$, also the term $x_0^{\beta_0}\ldots x_h^{\beta_h+1} \ldots x_k^{\beta_k-1}\ldots x_n^{\beta_n}$ belongs to $\cN(J)$, for each $0\leq h <k\leq n$ with $\beta_k>0$.

A strongly stable ideal is always Borel-fixed, that is fixed under the action of the Borel subgroup of lower-triangular invertibles matrices. If $ch(K)=0$, also the vice versa holds (e.g. \cite{D}) and \cite{Ga} guarantees that in generic coordinates the initial ideal of an ideal $I$, with respect to a fixed term order, is a constant Borel-fixed monomial ideal, denoted by $gin(I)$ and called the {\em generic initial ideal} of $I$. 

Recall that some Gr\"obner-like bases and their structure were introduced by Janet \cite{J1,J2,P} and the related algorithm has been discussed as an alternative to Buchberger's algorithm under the name of involutive bases by Gerdt and Blinkov \cite{GB1,GB2}. In \cite{Ma} the author investigates interrelation of Borel-fixed ideals and existence (finiteness) of their Pommaret bases. In doing so, a Pommaret basis exists if and only if it is a minimal Janet basis (see \cite{Ge}).

In \cite{RS} a {\em reduction relation $\stackrel{\mathcal F}\longrightarrow$} modulo a given set $\mathcal F$ of marked polynomials is defined in the usual sense of Gr\"obner bases theory and it is proved that, if  $\stackrel{\mathcal F}\longrightarrow$ is Noetherian, then there exists an admissible term order $\prec$ on $S$ such that $Ht(f)$ is the $\prec$-leading term of $f$, for all $f\in \mathcal F$, being the converse already known \cite{Bu}. A similar approach has been proposed in \cite{MaRe1} and better explained in \cite{MaRe2} for defining and computing Gr\"obner bases in group rings.
 
If we take a $J$-marked set $G$, $\stackrel{G}\longrightarrow$ can be non-Noetherian, as the following example shows. However, we will see that, if $J$ is a strongly stable ideal and $G$ is a $J$-marked set, every homogeneous polynomial has a $J$-reduced form modulo $(G)$. 

\begin{example}\label{es-8} Let us consider the $J$-marked set $G=\{f_1=xy+yz, f_2=z^2+xz\}$, where $Ht(f_1)=xy$ and $Ht(f_2)=z^2$. The term $h=xyz$ can be rewritten only by $xyz-zf_1=-yz^2$ and the term $-yz^2$ can be rewritten only by $-yz^2+yf_2=xyz$, which is again the term we wanted to rewrite. Hence, the reduction relation $\stackrel{G}\longrightarrow$ is not Noetherian. Observe that in this case $J=(xy,z^2)$ is not strongly stable, but $\stackrel{G}\longrightarrow$ can be non-Noetherian also if $J$ is strongly stable, as Example \ref{noGBasis} will show.
\end{example}

\begin{theorem}{\rm (Existence of $J$-reduced forms)}\label{$J$-normal form}
Let $G=\{f_\alpha=x^\alpha-\sum c_{\alpha\gamma} x^\gamma : Ht(f_\alpha)=x^\alpha\in B_J\}$ be a $J$-marked set, with $J$ strongly stable. Then, every polynomial of \ $S$ has a $J$-reduced form modulo $(G)$.
\end{theorem}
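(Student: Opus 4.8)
The plan is to reduce the statement to a finite-dimensional linear-algebra claim, one degree at a time. Since a $J$-normal form of $h=\sum_m h_m$ can be assembled from $J$-normal forms of the homogeneous components $h_m$, it suffices to prove that $S_m=(G)_m+\langle \cN(J)_m\rangle$ for every $m$; equivalently, that the composition $\pi\colon (G)_m\hookrightarrow S_m\twoheadrightarrow S_m/\langle \cN(J)_m\rangle=:V_m$ is surjective. The space $V_m$ has $K$-basis $\{\overline{x^\delta}:x^\delta\in J_m\}$, so this is a statement about a map of finite-dimensional spaces and $\dim_K V_m=\#\{x^\delta\in J_m\}$.

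Next I would exhibit one well-chosen element of $(G)_m$ for each $x^\delta\in J_m$. Here strong stability enters through the Eliahou--Kervaire-type factorization: every $x^\delta\in J$ has a unique decomposition $x^\delta=x^\eta x^\alpha$ with $x^\alpha\in B_J$ and $\max(x^\eta)\le\min(x^\alpha)$. Put $g_\delta:=x^\eta f_\alpha\in (G)_m$; then $g_\delta=x^\delta-\sum_\gamma c_{\alpha\gamma}\,x^{\eta+\gamma}$ with every $x^\gamma\in\cN(J)$, so in $V_m$ the classes satisfy $\pi(g_\delta)=\overline{x^\delta}-\sum_{\gamma\,:\,x^{\eta+\gamma}\in J}c_{\alpha\gamma}\,\overline{x^{\eta+\gamma}}$ (the $\cN(J)$-terms dying). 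I would then fix a multiplicative total order $\prec$ on the degree-$m$ monomials of ``reverse-lexicographic'' type --- one that assigns lower rank to monomials lying deeper in $J$, e.g. comparing exponent vectors lexicographically starting from $x_0$ with larger exponent counting as smaller --- and prove that every surviving monomial $x^{\eta+\gamma}\in J$ occurring in $g_\delta$ satisfies $x^{\eta+\gamma}\prec x^\delta$. Granting that, the square matrix expressing the $\pi(g_\delta)$ in the basis $\{\overline{x^\delta}\}$ is triangular with $1$'s on the diagonal, hence invertible; so the $\pi(g_\delta)$ form a basis of $V_m$, $\pi$ is onto, and a $J$-normal form of an arbitrary homogeneous $h$ of degree $m$ is obtained by iteratively subtracting the appropriate scalar multiples of the $g_\delta$.

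The heart of the proof --- and the step I expect to be the main obstacle --- is the combinatorial lemma that guarantees this triangularity: \emph{if $x^\gamma\in\cN(J)$, $x^\alpha\in B_J$, $|\gamma|=|\alpha|$, $\max(x^\eta)\le\min(x^\alpha)$ and $x^\eta x^\gamma\in J$, then $x^\eta x^\gamma\prec x^\eta x^\alpha$.} Since $\prec$ is multiplicative this amounts to $x^\gamma\prec x^\alpha$, which has to be squeezed out of strong stability: $x^\gamma$ itself lies outside $J$ and the product $x^\eta x^\gamma$ enters $J$ only by adjoining variables no larger than $\min(x^\alpha)$, so pushing that extra mass upward (allowed in a strongly stable ideal, and without altering the part of the exponent vector strictly above $\min(x^\alpha)$) shows that $x^\gamma$ cannot ``dominate'' $x^\alpha$ on the low variables; a bookkeeping of partial sums of exponents then forces the inequality, the point being that if instead $x^\gamma\succeq x^\alpha$ held one could transform $x^\alpha$ into $x^\gamma$ by a chain of upward moves and conclude $x^\gamma\in J$, a contradiction. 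I would also note that this is precisely the phenomenon underlying Example~\ref{noGBasis}: the naive reduction relation $\stackrel{G}\longrightarrow$ may cycle even for strongly stable $J$, so selecting the Eliahou--Kervaire divisor at each step --- rather than an arbitrary one dividing the head term --- is what makes $\prec$ strictly decrease and the process terminate; the remaining verifications (compatibility with the grading, assembling homogeneous pieces) are routine.
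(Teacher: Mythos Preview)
Your strategy---for each $x^\delta\in J_m$ take the Eliahou--Kervaire product $g_\delta=x^\eta f_\alpha$ and show the resulting matrix is unitriangular---is sound, but the order you propose does not give triangularity, and the sketch of the key lemma is where the argument breaks. Take $J=(x_2^2,\,x_1x_2,\,x_0^2x_2,\,x_0^2x_1^2,\,x_0x_1^3,\,x_1^4)$ in $K[x_0,x_1,x_2]$, which is strongly stable. With $x^\alpha=x_0^2x_2\in B_J$, $x^\gamma=x_0x_1^2\in\cN(J)_3$, and $x^\eta=x_0$ (so $\max(x^\eta)=x_0=\min(x^\alpha)$) one has $x^\eta x^\gamma=x_0^2x_1^2\in J$, yet $x_0x_1^2>_{drl}x_0^2x_2$; so your claimed inequality $x^{\eta+\gamma}\prec x^\delta$ fails for the row $x^\delta=x_0^3x_2$. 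The error in the sketch is the direction of the implication: degree-revlex is a linear \emph{extension} of the Borel partial order, so ``$x^\gamma$ is reachable from $x^\alpha$ by upward moves'' implies $x^\gamma>_{drl}x^\alpha$, not conversely---and indeed $x_0x_1^2$ is not obtainable from $x_0^2x_2$ by raising variables. Nor will some other multiplicative monomial order rescue this uniformly in $J$; for instance lex (with $x_2$ largest) fails on $J=(x_2^3,x_1x_2^2,x_1^2x_2)$ with $\alpha=x_1^2x_2$, $\gamma=x_0x_2^2$, $\eta=x_1$.

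Your strategy is essentially what the paper carries out \emph{later}, in Section~3 (Definition~\ref{def:$V_m$} and Lemma~\ref{reduction1}), with the crucial difference that the order there is not on the head terms at all: one compares the EK \emph{cofactors} $x^\eta$ by $>_{drl}$ and declares every element of $G_m$ to lie below every genuine product; with that refinement the triangularity does hold. The paper's own proof of Theorem~\ref{$J$-normal form}, however, is different and shorter: a minimal-counterexample argument on the pair (degree of $x^\beta$, least $x_i$ with $x^\beta/x_i\in J$). One reduces $x^\beta/x_i$ by induction on the degree, multiplies back by $x_i$, and a single application of strong stability shows that any surviving term $x_ix^\gamma$ lying in $J$ admits a factorization through a strictly smaller variable $x_j<x_i$, contradicting minimality---no global triangularizing order is needed.
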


\begin{proof}
It is sufficient to prove that our assertion holds for the terms, because every polynomial is a linear combination of terms. Let us consider the set $E$ of terms which have not a $J$-reduced form modulo $(G)$. Of course $E\cap B_J=\emptyset$. If $E$ is not empty and $x^\beta$ belongs to $E$, then $x^\beta=x_i x^\delta$ for some $x^\delta$ in $J$. We choose $x^\beta$ so that its degree $m$ is the minimum in $E$ and that, among the terms of degree $m$ in $E$,  $x_i$ is minimal. Let $\sum c_{\delta\gamma} x^\gamma$ be a $J$-reduced form modulo $(G)$ of $x^\delta$, that exists by the minimality of $m$. Thus we can rewrite $x^\beta$ by $\sum c_{\delta\gamma} x_i x^\gamma$. We claim that all terms $x_i x^\gamma$ do not belong to $E$. On the contrary, if $x_ix^\gamma$ belongs to $E$, then $x_ix^\gamma=x_jx^\epsilon$ for some $x^\epsilon$ in $J$. If it were $x_i<x_j$ then, by the strongly stable property and since $x^\gamma$ belongs to $\cN(J)$, we would get that $x^\epsilon = x_i {!
 x^\gamma}/{x_j}$ belongs to $\cN(J)$, that is impossible. So, we have $x_j<x_i$ and by the minimality of $x_i$ the term $x_ix^\gamma$ has a $J$-reduced form modulo $(G)$. This is a contradiction and so $E$ is empty.
\end{proof}

\begin{corollary} \label{generatore}
If $J$ is a strongly stable ideal and $I$ a homogeneous ideal containing a $J$-marked set $G$, then $\cN(J)$ generates $S/I$ as a $K$-vector space. Thus $dim_K I_m \geq dim_K J_m$, for every $m\geq 0$. 
\end{corollary}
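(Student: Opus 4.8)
The plan is to deduce Corollary~\ref{generatore} directly from the existence of $J$-normal forms established in Theorem~\ref{$J$-normal form}. First I would observe that, since $G \subseteq I$, we have $(G) \subseteq I$, and hence any $J$-normal form modulo $(G)$ of a polynomial $h$ is in particular a representative of $h$ modulo $I$ whose support lies in $\cN(J)$. By Theorem~\ref{$J$-normal form}, every polynomial of $S$ — in particular every homogeneous polynomial, and every term — admits such a $J$-normal form modulo $(G)$. Passing to the quotient, this says precisely that the image of $\cN(J)$ spans $S/(G)$ as a $K$-vector space, and therefore also spans the further quotient $S/I$ as a $K$-vector space.

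Next I would pass to the graded pieces. Since $J$, $(G)$, and $I$ are all homogeneous, the spanning statement holds degree by degree: for every $m \geq 0$, the finite set $\cN(J)_m$ of terms of degree $m$ outside $J$ spans the finite-dimensional $K$-vector space $(S/I)_m$. Hence
\[
\dim_K (S/I)_m \leq \#\,\cN(J)_m = \dim_K (S/J)_m.
\]
Combining this with $\dim_K S_m = \dim_K I_m + \dim_K (S/I)_m$ and the analogous identity for $J$, we get $\dim_K I_m \geq \dim_K J_m$ for every $m \geq 0$, which is the asserted inequality.

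There is essentially no obstacle here: the corollary is a formal consequence of Theorem~\ref{$J$-normal form} together with the elementary fact that a normal form modulo a subideal is still a normal form modulo the larger ideal. The only point requiring a word of care is the homogeneity bookkeeping — that the $J$-normal form of a homogeneous polynomial may be taken homogeneous (as recorded after Definition~\ref{def:normalform}), so that the span statement refines to each graded component and the dimension count of $\cN(J)_m$ against $(S/J)_m$ is legitimate. Everything else is a direct translation between ``$\cN(J)$ spans $S/I$'' and the stated dimension bound.
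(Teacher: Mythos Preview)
Your proof is correct and follows essentially the same approach as the paper: invoke Theorem~\ref{$J$-normal form} to get a $J$-normal form modulo $(G)\subseteq I$ for every polynomial, conclude that $\cN(J)$ spans $S/I$, and read off the dimension inequality degree by degree. The paper's proof is terser, but the argument is the same.
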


\begin{proof}
By Theorem~\ref{$J$-normal form}, for every polynomial $h$ there exists a polynomial $h_0$ such that $h-h_0$ belongs to $(G)\subseteq I$ and $Supp(h_0)\subseteq \cN(J)$. So, all the elements of $S/I$ are linear combinations of terms of $\cN(J)$ and the claim follows.
\end{proof}  

\begin{corollary}\label{characterization}
Let $J$ be a strongly stable ideal and $G$ be a $J$-marked set. Then, $G$ is a $J$-marked basis if and only if $dim_K (G)_m \leq dim_K J_m$, for every $m\geq 0$ or, equivalently, $\cN(J)$ is free in $S/(G)$. 
\end{corollary}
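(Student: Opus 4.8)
The plan is to deduce Corollary~\ref{characterization} from Corollary~\ref{generatore} together with the Hilbert-function bookkeeping that is already available. The key observation is that for a $J$-marked set $G$, in each degree $m$ the images of the terms in $\cN(J)_m$ always span $(S/(G))_m$: this is exactly the content of Corollary~\ref{generatore} applied to $I=(G)$, and it gives the inequality $\dim_K(S/(G))_m \le \vert\cN(J)_m\vert = \dim_K(S/J)_m$, equivalently $\dim_K(G)_m \ge \dim_K J_m$ for every $m$.

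First I would spell out the ``if'' direction. Assume $\dim_K(G)_m \le \dim_K J_m$ for all $m$. Combining with the reverse inequality just noted, we get $\dim_K(G)_m = \dim_K J_m$ for all $m$, hence $\dim_K(S/(G))_m = \vert\cN(J)_m\vert$. Since the classes of the terms of $\cN(J)_m$ already span the $m$-th graded piece of $S/(G)$ (Corollary~\ref{generatore}) and their number equals its dimension, they must be linearly independent there; doing this in every degree $m$ shows $\cN(J)$ is a $K$-basis of $S/(G)$, i.e. $S=(G)\oplus\langle\cN(J)\rangle$, which is precisely the condition for $G$ to be a $J$-marked basis (Definition~\ref{def:Jbasis}).

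For the ``only if'' direction, if $G$ is a $J$-marked basis then by Remark~\ref{$J$-marked basis} the ideal $(G)$ has the same Hilbert function as $J$, so $\dim_K(G)_m = \dim_K J_m \le \dim_K J_m$ for every $m$, giving the stated inequality. The equivalence with ``$\cN(J)$ is free in $S/(G)$'' is then immediate: freeness of the (already spanning) set $\cN(J)$ in $S/(G)$ is, degree by degree, the same as the equality of dimensions, and the equality of dimensions is what the inequality forces once Corollary~\ref{generatore} is in hand.

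I do not expect any genuine obstacle here; the whole argument is a two-sided squeeze on graded dimensions. The one point that deserves care is making sure the argument is carried out degree by degree and that the spanning statement from Corollary~\ref{generatore} is invoked correctly (it is stated for an arbitrary homogeneous $I \supseteq G$, so it certainly applies to $I=(G)$). Everything else is the standard fact that a spanning set whose cardinality matches the dimension of the space is a basis.
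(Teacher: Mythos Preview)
Your argument is correct and matches the paper's approach: both rely on Corollary~\ref{generatore} for the spanning of $\cN(J)$ and the inequality $\dim_K(G)_m\ge\dim_K J_m$, and then finish by the dimension squeeze. The only cosmetic difference is that the paper phrases the equivalence via Proposition~\ref{first properties} (uniqueness of $J$-normal forms) rather than invoking Definition~\ref{def:Jbasis} and Remark~\ref{$J$-marked basis} directly, but this amounts to the same thing.
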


\begin{proof} 
By Proposition \ref{first properties}, $G$ is a $J$-marked basis if and only if every polynomial has a unique $J$-reduced form modulo $(G)$. So, it is enough to apply Theorem~\ref{$J$-normal form} and Corollary~\ref{generatore}.
\end{proof}

\begin{corollary}\label{cor:ultimo}
Let $J$ be a strongly stable ideal and $I$ be a homogeneous ideal. Then
$I$ belongs to $\cM(J)$ if and only if $I$ has a $J$-marked basis.
\end{corollary}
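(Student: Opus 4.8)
The plan is to establish the two implications separately, reducing each to the Hilbert-function estimates already available.

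The implication ``$I$ has a $J$-marked basis $\Rightarrow I\in\cM(J)$'' is immediate from the definitions. If $G$ is a $J$-marked basis with $(G)=I$, then by Definition~\ref{def:Jbasis} the set $\cN(J)$ is a $K$-basis of $S/(G)=S/I$, which is exactly the requirement in Definition~\ref{def:Mf} for $I$ to lie in $\cM(J)$; this is also the implication (i)$\Rightarrow$(ii) of Proposition~\ref{first properties}.

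For the converse, assume $I\in\cM(J)$. First, Remark~\ref{rem:first}(1) produces a $J$-marked set $G$ contained in $I$. Next I would compare dimensions: since $\cN(J)$ is a $K$-basis of $S/I$, we get $\dim_K I_m=\dim_K S_m-\#\cN(J)_m=\dim_K J_m$ for every $m\ge 0$. Because $(G)\subseteq I$, this forces $\dim_K(G)_m\le\dim_K I_m=\dim_K J_m$ for all $m$, so by Corollary~\ref{characterization} the set $G$ is a $J$-marked basis. Finally, since $G$ is a $J$-marked basis, Remark~\ref{$J$-marked basis} gives $\dim_K(G)_m=\dim_K J_m=\dim_K I_m$ for every $m$; combined with the inclusion $(G)_m\subseteq I_m$ of finite-dimensional vector spaces, this yields $(G)_m=I_m$ for all $m$, hence $(G)=I$. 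Thus $G$ is a $J$-marked basis of $I$.

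The proof is short, and the only subtle point is conceptual rather than computational: membership in $\cM(J)$ rigidifies the Hilbert function of $I$, so the a priori inequality $\dim_K(G)_m\ge\dim_K J_m$ coming from Corollary~\ref{generatore} applied to the ideal $(G)$ is squeezed into an equality by the inclusion $(G)\subseteq I$, and that very equality is simultaneously what forces the marked set sitting inside $I$ to generate all of $I$. Strong stability of $J$ is used only through Corollaries~\ref{generatore} and~\ref{characterization}, which already encapsulate the needed input, so no further appeal to it is required here.
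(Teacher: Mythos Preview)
Your proof is correct and follows essentially the same route as the paper's: the paper's two-line argument simply cites Remark~\ref{rem:first}(1) and Corollary~\ref{characterization}, and you have unpacked exactly those citations, including the dimension-count step showing that the $J$-marked set $G\subseteq I$ actually generates all of $I$ (a point the paper leaves implicit).
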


\begin{proof} 
If $I$ has a $J$-marked basis then $I$ belongs to $\cM(J)$ by definition. Vice versa, apply Remark \ref{rem:first}(1) and Corollary \ref{characterization}.
\end{proof}

\begin{remark}\label{generic coordinate}
Every reduced Gr\"obner basis of a homogeneous ideal with respect to a graded term order is a $J$-marked basis for some monomial ideal $J$, hence every homogeneous ideal contains a $J$-marked basis. But, unless we are in generic coordinates, not every (homogeneous) ideal contains a $J$-marked basis with $J$ strongly stable, as for example a monomial ideal which is not strongly stable. 
\end{remark}

Let $G$ be a $J$-marked basis with $J$ strongly stable. Thanks to the existence and the unicity of $J$-reduced forms, $G$ can behave like a Gr\"obner basis in solving problems, as the membership ideal problem in the homogeneous case. Indeed, by the unicity of $J$-reduced forms, a polynomial belongs to the ideal $(G)$ if and only if its $J$-reduced form modulo $(G)$ is null. But, until now, we do not yet have a computational method to construct $J$-reduced forms.  

In next section, by exploiting the proof of Theorem \ref{$J$-normal form}, we  provide an algorithm which, under the hypothesis that $J$ is strongly stable, reduces every homogeneous polynomial to a $J$-reduced form modulo $(G)$ in a finite number of steps, although $\stackrel{G}\longrightarrow$ is not necessarily Noetherian. This fact allows us also to recognize  when a $J$-marked set is a $J$-marked basis by a Buchberger-like criterion and, hence, to develop effective computational aspects of $J$-marked bases.


\section{Effective methods for $J$-marked bases}

Let $I$ be the homogeneous ideal generated by a $J$-marked set $G=\{f_\alpha=x^\alpha-\sum c_{\alpha\gamma} x^\gamma : Ht(f_\alpha)=x^\alpha\in B_J\}$, where {\em $J$ is strongly stable}, so that every polynomial has a $J$-reduced form modulo $I$, by Theorem~\ref{$J$-normal form}. 

In this section we obtain an efficient procedure to compute in a finite number of steps a $J$-reduced form modulo $I$ of every homogeneous polynomial. To this aim, we need some more definitions and results.

For every degree $m$, the $K$-vector space $I_m$ formed by the homogeneous polynomials of degree $m$ of $I$ is generated by the set $W_m=\{x^\delta f_\alpha : x^{\delta+\alpha} \text{ has degree } m, f_\alpha \in G\}$, that becomes a set of marked polynomials by letting $Ht(x^\delta f_\alpha)=x^{\delta+\alpha}$. 

\begin{lemma}\label{minimum}
Let $x^\beta$ be a term of $J_m\setminus B_J$ and $x_i=\min(x^\beta)$. Then ${x^\beta}/{x_i}$ belongs to $J_{m-1}$. 
\end{lemma}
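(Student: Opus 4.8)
The plan is to reduce this to the defining combinatorial property of strongly stable ideals. Write $x^\beta = x_0^{\beta_0}\cdots x_n^{\beta_n} \in J_m\setminus B_J$ and set $x_i = \min(x^\beta)$, so $\beta_i>0$ and $\beta_j=0$ for all $j<i$. I want to show $x^\beta/x_i \in J$. Since $x^\beta \in J$, there is a minimal generator $x^\alpha \in B_J$ with $x^\alpha \mid x^\beta$. If $x_i \nmid x^\alpha$, then already $x^\alpha \mid x^\beta/x_i$, and we are done immediately. So the only case to treat is when $x_i \mid x^\alpha$, i.e. $\alpha_i>0$; moreover $x^\alpha \ne x^\beta$ because $x^\beta \notin B_J$, so $x^\beta = x^\alpha x^\delta$ for some term $x^\delta \ne 1$.

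The key step is then to use the strongly stable property to "move" the offending factor. Since $x^\delta\ne 1$, pick any variable $x_j$ dividing $x^\delta$; because $x_i=\min(x^\beta)$ and $x_j$ divides $x^\beta$, we have $i\le j$, and since $x_i=\min(x^\beta)$ while $j$ indexes a variable in $x^\delta \ne x^\beta/$, in fact one can arrange $i < j$ (if $x^\delta$ were a pure power of $x_i$ then $x^\alpha$ would equal $x^\beta$ divided by a power of $x_i$, and I would instead argue directly that $x^\alpha\mid x^\beta/x_i$ as soon as $\alpha_i<\beta_i$, which holds here). Now consider $x^\alpha$: it lies in $J$, it has $\alpha_i>0$, and $i<j$, so by strong stability the term $x^{\alpha'} := x^\alpha \cdot x_j/x_i$ belongs to $J$. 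But $x^{\alpha'}$ divides $x^\beta/x_i$, because $x^\beta/x_i = x^\alpha x^\delta/x_i = x^{\alpha'}\cdot (x^\delta/x_j)$ and $x_j\mid x^\delta$. Hence $x^\beta/x_i \in J$, and since it clearly has degree $m-1$, we get $x^\beta/x_i \in J_{m-1}$.

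The only point requiring care is the dichotomy on whether $x^\delta$ is a power of $x_i$ or not, i.e. making sure a "larger" variable $x_j>x_i$ is available to absorb the division by $x_i$ via strong stability. I expect this to be the main (minor) obstacle: if $x^\delta = x_i^t$ then $x^\beta = x^\alpha x_i^t$ with $t\ge 1$, so $x^\beta/x_i = x^\alpha x_i^{t-1}$ is still a multiple of $x^\alpha\in J$, and we are done without invoking strong stability at all. Thus in every case $x^\beta/x_i\in J$, completing the argument.
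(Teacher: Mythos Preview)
Your proof is correct and uses the same idea as the paper --- one application of the strongly stable property to swap $x_i$ for a larger variable. The paper's version is organized a bit more economically: instead of anchoring at a minimal generator $x^\alpha\mid x^\beta$ and splitting into three cases, it simply picks any $x_j$ with $x^\beta/x_j\in J_{m-1}$ (such $x_j$ exists since $x^\beta\notin B_J$), and if $j\ne i$ applies strong stability directly to the degree-$(m-1)$ term $x^\beta/x_j=x_ix^\delta\in J$ to get $x^\beta/x_i=x_jx^\delta\in J$, bypassing your case analysis entirely.
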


\begin{proof}
By the hypothesis there exists at a least a term of $J_{m-1}$ that divides the given term $x^\beta$. So, let $x_j$ such that $x^\beta/x_j$ belongs to $J_{m-1}$. If $x_j=x_i$, we are done. Otherwise, we get $x^\beta=x_i x_j x^\delta$, for some term $x^\delta$, so that $x_i x^\delta=x^\beta/x_j$ belongs to $J_{m-1}$. By the definition of a strongly stable ideal and since $x_j>x_i$, we obtain that $x^\beta/x_i=x_jx^\delta$ belongs to $J_{m-1}$. 
\end{proof}

The property of Borel ideals, that we point out by Lemma \ref{minimum}, allows us to define the following special subset of $W_m$, by induction on $m$.

\begin{definition}\label{def:$V_m$}
If $m=\alpha_J$ is the initial degree of $J$, we set $V_m:=G_m$; so, for every term $x^\beta\in B_J$ of degree $\alpha_J$, there is a unique polynomial $g_\beta\in V_{\alpha_J}$ such that $Ht(g_\beta)=x^\beta$. If $m=\alpha_J+1$, for every $x^\beta \in J_{\alpha_J+1}\setminus G_{\alpha_J+1}$, we set $g_\beta:=x_i g_\epsilon$, where 
$x_i=\min(x^\beta)$ and $g_\epsilon$ is the unique polynomial of $V_{\alpha_J}$ such that $Ht(g_\epsilon)=x^\epsilon$. Thus, we let $V_{\alpha_J+1}:=G_{\alpha_J+1}\cup \{g_\beta \ : \ x^\beta \in J_{\alpha_J+1}\setminus B_J\}$. Analogously, for every $m>\alpha_J$ and for every $x^\beta\in J_m\setminus B_J$, we set $g_\beta:=x_i g_\epsilon$, where $x_i=\min(x^\beta)$ and $g_\epsilon$ is the unique polynomial of $V_{m-1}$ with head term $x^\epsilon={x^\beta}/{x_i}$, and we let $V_m:=G_m\cup \{g_\beta \ : \ x^\beta \in J_m\setminus B_J\}$.
\end{definition}

\begin{remark}\label{rem:minimum}
By construction, for every element $g_\beta$ of $V_m\subseteq W_m$ there exist $x^\delta$ and $f_\alpha\in G$ such that $g_\beta=x^\delta f_\alpha$ and $x^\delta=1$ or $\max(x^\delta)\leq \min(x^\alpha)$. Indeed, it is enough to take $g_{\beta_1}=g_\beta/\min(x^\beta)$, $g_{\beta_2}=g_{\beta_1}/\min(x^{\beta_1})$ and so on, until we obtain a polynomial $f_\alpha$ of $G$ and the term $x^\delta=\min(x^\beta)\cdot \prod\min(x^{\beta_i)}$. In particular, we get $\min(x^\delta)=\min(x^\beta)$. 
\end{remark}

For every integer $m\geq \alpha_J$, we define the following total order $\succeq_m$ on $V_m$. Note that we start by fixing any ordering on $G_m$, that needs not to be a term order.

\begin{definition}\label{order1}
Let $G_m$ be ordered with respect to any order $\geq$ and, for every $f_\alpha, f_{\alpha'} \in G_m$, let $f_\alpha \succeq_m f_{\alpha'}$ if and only if $f_\alpha \geq f_{\alpha'}$. For every $g_\beta \in V_m\setminus G_m$ and $f_\alpha \in G_m$, we set $g_{\beta} \succeq_m f_\alpha$. For every $m>\alpha_J$, given $x_ig_\epsilon$, $x_jg_{\eta} \in V_m\setminus G_m$, where $x_i=\min(x_ix^\epsilon)$ and $x_j=\min(x_jx^\eta)$, we set 
$$x_ig_\epsilon \succeq _m x_jg_{\eta} \Leftrightarrow x_i > x_j \text{ or } x_i=x_j \text{ and } g_\epsilon \succeq _{m-1} g_{\eta}.
$$
\end{definition}

By the definition of $V_m$ and by well-known properties of a strongly stable ideal, we get the routine \textsc{VConstructor} to compute $V_m$, for every $\alpha_J\leq m \leq s$. 

\begin{algorithm}
\begin{algorithmic}[1]
\Procedure{VConstructor}{$G$,$s$} $\rightarrow V_{\alpha_j}\ldots,V_{s}$
\Require $G$ is a $J$-marked set so that $G_m$ is ordered with respect to any order, for every $m\geq \alpha_J$, with $J$ a strongly stable ideal, and $s\geq \alpha_J$. 
\Ensure $V_m$ ordered by $\succeq_m$, for every $\alpha_J\leq m \leq s$
\State $\alpha_J:=\min\{deg(Ht(f_\alpha)) \vert f_\alpha \in G\}$
\State $V_{\alpha_J}:=G_\alpha$
\For{$m=\alpha_J+1$ to $s$}
\State $V_m:=G_m$;
\For{$i=0$ to $n$}
\For{$j=1$ to $\vert V_{m-1}\vert$}
\If{$i\leq \min(Ht(V_{m-1}[j]))$}
\State $V_m=V_m\cup\{x_iV_{m-1}[j]\}$
\EndIf 
\EndFor
\EndFor
\EndFor
\State \Return $V_{\alpha_j}\ldots,V_{s}$;
\EndProcedure
\end{algorithmic}
\end{algorithm}

\begin{lemma}\label{reduction1}
With the above notation,
$$x_i g_\epsilon \in V_m\setminus G_m \text{ and } x^\beta \in Supp(x_i g_{\epsilon})\setminus\{x_i x^\epsilon\} \text{ with } \ g_\beta\in V_m  \Rightarrow x_i g_{\epsilon} \succ_m g_\beta.$$
\end{lemma}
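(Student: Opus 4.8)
The plan is to prove the statement by induction on $m\geq\alpha_J$, exploiting the recursive structure both of the sets $V_m$ (Definition \ref{def:$V_m$}) and of the orders $\succeq_m$ (Definition \ref{order1}). When $m=\alpha_J$ one has $V_m=G_m$, so $V_m\setminus G_m$ is empty and there is nothing to prove; this is the base of the induction.

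For the inductive step, I would fix $x_ig_\epsilon\in V_m\setminus G_m$ with head term $x_ix^\epsilon$, where $x_i=\min(x_ix^\epsilon)$ and hence $x_i\leq\min(x^\epsilon)$ by the construction of $V_m$. Using Remark \ref{rem:minimum} I write $g_\epsilon=x^\delta f_\alpha$ with $x^\delta=1$ or $\max(x^\delta)\leq\min(x^\alpha)$, so that $x^\epsilon=x^\delta x^\alpha$ and every $x^\beta\in Supp(x_ig_\epsilon)\setminus\{x_ix^\epsilon\}$ is of the form $x^\beta=x_ix^\delta x^\gamma$ with $x^\gamma\in Supp(f_\alpha)\cap\cN(J)$ and $x^\gamma\neq x^\alpha$. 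Assume $g_\beta\in V_m$, i.e. $x^\beta\in J_m$, and set $x_k=\min(x^\beta)$; since $x_i\mid x^\beta$ we get $x_k\leq x_i$. If $x^\beta\in B_J$ then $g_\beta\in G_m$ while $x_ig_\epsilon\in V_m\setminus G_m$, so $\succeq_m$ gives at once $x_ig_\epsilon\succ_m g_\beta$. Otherwise $x^\beta\in J_m\setminus B_J$, hence $g_\beta=x_kg_\zeta$ with $g_\zeta\in V_{m-1}$ of head term $x^\beta/x_k$; and if $x_k<x_i$, the first clause of Definition \ref{order1} again yields $x_ig_\epsilon\succ_m g_\beta$.

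The only remaining, and crucial, case is $x_k=x_i$, so that $g_\beta=x_ig_\zeta$ and $x^\zeta:=x^\beta/x_i=x^\delta x^\gamma$ is the head term of $g_\zeta$. Here one must descend in $m$, and the key point is that $x^\delta\neq 1$: otherwise $x^\beta=x_ix^\gamma$ and Lemma \ref{minimum} would force $x^\gamma=x^\beta/x_i\in J_{m-1}$, contradicting $x^\gamma\in\cN(J)$. Thus $x^\delta x^\alpha$ is a proper multiple of $x^\alpha\in B_J$, hence is not in $B_J$, so $g_\epsilon\in V_{m-1}\setminus G_{m-1}$; moreover $x^\delta x^\gamma$ lies in $Supp(g_\epsilon)\setminus\{x^\delta x^\alpha\}$ and in $J_{m-1}$ (again by Lemma \ref{minimum}, applied to $x^\beta\in J_m\setminus B_J$). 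The inductive hypothesis applied to $g_\epsilon$ and to the term $x^\delta x^\gamma$ then gives $g_\epsilon\succ_{m-1}g_\zeta$, using that $g_\zeta$ is the unique element of $V_{m-1}$ with head term $x^\delta x^\gamma$. Finally, from $x_i=x_k$ and $g_\epsilon\succ_{m-1}g_\zeta$ the recursive clause of Definition \ref{order1} yields $x_ig_\epsilon\succ_m x_ig_\zeta=g_\beta$, completing the induction.

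I expect the main obstacle to be exactly the bookkeeping in this last case: one has to see that the relevant non-head term of $x_ig_\epsilon$, after cancelling the factor $x_i$, reappears as a non-head term of the strictly smaller polynomial $g_\epsilon\in V_{m-1}$, and that it is still a term of $J_{m-1}$, so that the induction can be triggered. The strongly stable hypothesis enters only through Lemma \ref{minimum}, which both makes Definition \ref{def:$V_m$} meaningful and supplies the two membership facts $x^\beta/x_i\in J_{m-1}$ exploited above.
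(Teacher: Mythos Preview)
Your proof is correct and takes essentially the same approach as the paper: induction on $m$ with a case split on whether $\min(x^\beta)<x_i$ or $\min(x^\beta)=x_i$, the latter handled by descending to $V_{m-1}$. Your version is more explicit---you treat the case $g_\beta\in G_m$ separately and, in the equality case, you verify $g_\epsilon\in V_{m-1}\setminus G_{m-1}$ via the argument that $x^\delta\neq 1$---whereas the paper leaves these checks implicit.
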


\begin{proof}
By induction on $m$, first observe that for $m=\alpha_J$ there is nothing to prove because $V_{\alpha_J}=G_{\alpha_J}$. For $m>\alpha_J$, let $g_\beta=x_j g_\eta \not\in G_m$. If $x_i=x_j$, then $x^\eta$ belongs to $Supp(g_\epsilon)\setminus\{x^\epsilon\}$ and, by the induction, we have $g_\eta \prec_{m-1} g_\epsilon$. Otherwise, note that every term of $Supp(x_i g_{\epsilon})$ is divided by $x_i$, so $x_jx^\eta=x_i x^\lambda$ and, by Remark \ref{rem:minimum}, we get $x_j=\min(x^\beta)=\min(x_i x^\lambda) \leq x_i$. 
\end{proof}

\begin{proposition} \label{construction of $J$-normal form} {\rm (Construction of $J$-reduced forms) }
With the above notation, every term $x^\beta\in J_m\setminus G_m$ can be reduced to a $J$-reduced form modulo $I$ in a finite number of reduction steps, using only polynomials of $V_m$. Hence, the reduction relation $\stackrel{V_m}\longrightarrow$ is Noetherian in $S_m$. 
\end{proposition}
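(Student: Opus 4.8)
The plan is to prove that every $x^\beta\in J_m\setminus G_m$ reduces to a $J$-normal form using only polynomials in $V_m$, and that $\stackrel{V_m}{\longrightarrow}$ is Noetherian on $S_m$, by induction on the total order $\succeq_m$ together with the reduction proved in Lemma~\ref{reduction1}. First I would observe that, by Definition~\ref{def:$V_m$}, the head term $x^\beta$ of $g_\beta=x_ig_\epsilon\in V_m\setminus G_m$ (with $x_i=\min(x^\beta)$) is the unique term of $Supp(g_\beta)$ lying in $J$; all the other terms of $Supp(g_\beta)$ lie in $\cN(J)$, and by Lemma~\ref{reduction1} each such term $x^{\beta'}$, if it equals a head term $x^{\beta'}$ of some $g_{\beta'}\in V_m$, satisfies $g_{\beta'}\prec_m g_\beta$. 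Hence rewriting $x^\beta$ via $g_\beta$, i.e. $x^\beta\longrightarrow x^\beta-g_\beta=\sum(\text{terms of }\cN(J))$, replaces $x^\beta$ by a combination of terms of $\cN(J)$ together with, possibly, other terms of $J_m$, but every term of $J_m$ that reappears is the head term of a polynomial strictly smaller in $\succeq_m$.

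The key step is then a well-founded induction on $\succeq_m$: I would take $x^\beta\in J_m\setminus G_m$ minimal (or proceed by induction from the bottom) and show that after one rewriting step via the polynomial $g_\beta\in V_m$ with head term $x^\beta$, every term of $J_m$ still occurring is the head term of a polynomial strictly $\prec_m g_\beta$, to which the inductive hypothesis applies; iterating, after finitely many steps only terms of $\cN(J)$ survive, giving a $J$-normal form. For terms $x^\beta\in B_J$ but not necessarily in $G_m$ — actually for $x^\beta\in G_m$ — the polynomial $f_\alpha\in G$ itself does the job since $Supp(f_\alpha)\setminus\{x^\alpha\}\subseteq\cN(J)$ by the definition of $J$-marked set. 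Since $\succeq_m$ is a total order on the finite set $V_m$, it is well-founded, so the induction terminates; and because every chain of reductions $\stackrel{V_m}{\longrightarrow}$ strictly decreases (in a suitable multiset/lexicographic sense built on $\succeq_m$) the largest head term of $V_m$-elements that could still be applied, $\stackrel{V_m}{\longrightarrow}$ is Noetherian on $S_m$. Concretely, to each polynomial $h\in S_m$ one may attach the (finite) multiset of those $g_\beta\in V_m\setminus G_m$ whose head term occurs in $Supp(h)\cap J$, ordered by $\succeq_m$; Lemma~\ref{reduction1} shows each reduction step strictly decreases this multiset in the Dershowitz--Manna order, which is well-founded.

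The main obstacle I expect is bookkeeping: one must make sure that a single reduction step via $g_\beta$ does not introduce a term of $J_m$ that is a head term of some $g_{\beta'}$ with $g_{\beta'}\succeq_m g_\beta$, and this is exactly the content of Lemma~\ref{reduction1}, so the real work is to package that lemma into a terminating-reduction argument rather than to prove anything new. A secondary point requiring care is that $V_m$ genuinely generates $J_m$ as a $K$-vector space modulo $\cN(J)_m$ — equivalently that every $x^\beta\in J_m\setminus G_m$ is indeed the head term of exactly one element of $V_m$ — which follows from Lemma~\ref{minimum} and Definition~\ref{def:$V_m$}, already recorded in Remark~\ref{rem:minimum}. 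With these in place the proof reduces to the well-founded induction sketched above, and the finiteness of the number of reduction steps (hence Noetherianity of $\stackrel{V_m}{\longrightarrow}$ in $S_m$) is immediate from the well-foundedness of the multiset order on the finite ordered set $(V_m,\succeq_m)$.
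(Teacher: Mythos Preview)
Your approach is essentially the paper's own: rewrite each $x^\beta\in J_m$ by the unique $g_\beta\in V_m$ having that head term, then invoke Lemma~\ref{reduction1} together with the finiteness of $(V_m,\succeq_m)$ to conclude termination; the paper compresses this into the single sentence ``applying Lemma~\ref{reduction1} repeatedly, we are done since $V_m$ is a finite set,'' while your Dershowitz--Manna multiset packaging is just a more explicit formalization of the same descent.

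One correction to your exposition: it is \emph{not} true that for $g_\beta=x_ig_\epsilon\in V_m\setminus G_m$ the non-head terms of $Supp(g_\beta)$ all lie in $\cN(J)$. Since $g_\beta=x^\delta f_\alpha$ for some $x^\delta\neq 1$, its non-head terms have the form $x^\delta x^\gamma$ with $x^\gamma\in\cN(J)$, and such a product may well land in $J_m$; this is precisely the situation Lemma~\ref{reduction1} is designed to handle. You in fact use this correctly from your second sentence onward (``together with, possibly, other terms of $J_m$''), so the slip is only in the opening claim and does not affect the validity of the argument.
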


\begin{proof}
By definition of $V_m$, every term $x^\beta$ of $J_m$ is the head term of one and only one polynomial $g_\beta$ of $V_m\subseteq W_m$. Hence, we rewrite $x^\beta$ by $g_\beta$ getting a $K$-linear combination of terms belonging to  
$Supp(g_\beta)\setminus \{x^\beta\}$. Applying Lemma~\ref{reduction1} repeately,
we are done since $V_m$ is a finite set.
\end{proof}

\begin{definition} 
A homogeneous polynomial, with support contained in $\cN(J)$ and in relation by $\stackrel{V_m}\longrightarrow$ to a homogeneous polynomial $h$ of degree $m$, is denoted by $\bar h$ and called {\em $V_m$-reduction of $h$}.
\end{definition} 

For every homogeneous polynomial $h$ of degree $m$, $\bar h$ is a $J$-reduced form modulo $I$. Hence, from the procedure described in the proof of Proposition~\ref{construction of $J$-normal form} we obtain the routine \textsc{ReducedFormConstructor} that, actually, forms a step of a division algorithm with respect to a $J$-marked set, with $J$ strongly stable. 

\begin{algorithm}
\begin{algorithmic}[1]
\Procedure{ReducedFormConstructor}{$h$,$V_m$} $\rightarrow \bar h$
\Require $h$ is a homogeneous polynomial of degree $m$
\Require a list $V_m$, as defined in Definition \ref{def:$V_m$}, and ordered by $\succeq_m$
\Ensure $V_m$-reduction $\bar h$ of $h$
\State $L:=\vert V_m\vert$;
\For{$K=1$ to $L$}
\State $x^\eta :=Ht(V_m[K])$; 
\State $a$:=coefficient of $x^\eta$ in $h$;
\If{$a\not=0$}
\State $h:=h-a\cdot V_m[K]$;   
\EndIf;
\EndFor
\State \Return $h$;
\EndProcedure
\end{algorithmic}
\end{algorithm}

\begin{remark}\label{future}
(1) There is a strong analogy between the union of the sets $V_m$ and the so-called {\em staggered bases}, introduced by \cite{GM} and studied also by \cite{MMT}. Moreover, the procedure to construct the sets $V_m$ mimics the one introduced by Janet in a context in which it was assumed (in generic coordinates) that the ideal generated by the head terms is Borel, and thus it is sufficient to extend the basis by multiplying each polynomial $g$ by variables $x_i\leq \min(Ht(g))$. Indeed, for constructing the set $V_m$ we multiply the polynomials of $V_{m-1}$ by the same variables considered for Janet bases. For this reason, we plan to study relations between $J$-marked bases and Janet bases in a future work, in which also a comparison with Border Bases would be interesting because of the lack of a term order \cite{MMM,Mou,MouT1,MouT2}. Anyway, we must point out that in our context the classical problem \lq\lq given a basis of an ideal, extend it for computing a Gr\"obner-like b!
 asis" is unnatural, because we have not an ideal, but we want to construct the family of all the ideals $I$ with a suitable $K$-vector basis of $S/I$.

(2) In the procedure \textsc{ReducedFormConstructor} we reduce a polynomial using $V_m$. Thus, it would be better if $V_m$ consisted of already reduced polynomials, in analogy with well-known efficient algorithms \cite{FGLM,MB}. Anyway, we think that our algorithm can be improved and we are making efforts in this direction.
\end{remark}

Now, we extend to $W_m$ the order $\succeq_m$ defined on $V_m$. In our setting, a term $x^\delta$ is higher than a term $x^{\delta'}$ with respect to the degree reverse lexicographic term order (for short $x^\delta >_{drl} x^{\delta'}$) if $\vert \delta\vert > \vert \delta'\vert$ or if $\vert \delta\vert = \vert \delta'\vert$ and the first non null entry of $\delta-\delta'$ is negative.

\begin{definition}\label{order2}
Let the polynomials of $G_m$ be ordered as in Definition \ref{order1} by any order $\geq$ and $x^\delta f_\alpha$, $x^{\delta'}f_{\alpha'}$ be two elements of $W_m$. We set 
$$x^\delta f_\alpha \succeq_m x^{\delta'}f_{\alpha'} \Leftrightarrow 
x^\delta >_{drl} x^{\delta'} \text{ or } x^\delta = x^{\delta'} \text{ and } f_\alpha\geq f_{\alpha'}.$$
\end{definition}

\begin{lemma}\label{reduction2} 
(i) For every two elements $x^\delta f_\alpha$, $x^{\delta'}f_{\alpha'}$ of $W_m$ we get $$x^\delta f_\alpha \succeq_m x^{\delta'}f_{\alpha'} \Rightarrow \forall x^\eta :\ \  x^{\delta+\eta} {f_\alpha} \succeq_{m'} x^{\delta'+\eta}f_{\alpha'},$$ where $m'=\vert \delta +\eta + \alpha\vert$.

(ii) Every polynomial $g_\beta\in V_m$ is the minimum with respect to $\preceq_m$ of the subset $W_\beta$ of $W_m$ containing all polynomials of $W_m$ with $x^\beta$ as head term.

(iii) $x^\delta f_\alpha \in W_m\setminus G_m \text{ and } x^\beta \in Supp(x^\delta f_\alpha)\setminus\{x^\delta x^\alpha\} \text{ with } \ g_\beta\in V_m  \Rightarrow x^\delta f_{\alpha} \succ_m g_\beta$.
\end{lemma}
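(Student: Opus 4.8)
The plan is to prove the three statements essentially independently, although (i) is the engine that powers (ii) and supplies the monotonicity needed in the induction for Lemma~\ref{reduction1}-type arguments, while (iii) is a strengthening of Lemma~\ref{reduction1} from $V_m$ to all of $W_m$. For part (i), I would simply unwind Definition~\ref{order2}: the hypothesis $x^\delta f_\alpha \succeq_m x^{\delta'}f_{\alpha'}$ splits into the case $x^\delta >_{drl} x^{\delta'}$ and the case $x^\delta = x^{\delta'}$ with $Ht(f_\alpha)\geq Ht(f_{\alpha'})$. In the second case multiplying by $x^\eta$ changes nothing on either side, so the conclusion is immediate. In the first case it suffices to observe that $>_{drl}$ is a term order, hence compatible with multiplication: $x^\delta >_{drl} x^{\delta'}$ forces $x^{\delta+\eta} >_{drl} x^{\delta'+\eta}$, and then again Definition~\ref{order2} gives $x^{\delta+\eta}f_\alpha \succ_{m'} x^{\delta'+\eta}f_{\alpha'}$. (One should note $\vert\delta+\eta+\alpha\vert = \vert\delta'+\eta+\alpha'\vert = m'$ since $\vert\delta+\alpha\vert=\vert\delta'+\alpha'\vert=m$, so the comparison $\succeq_{m'}$ makes sense.)

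For part (ii), fix $x^\beta\in J_m$ and consider $W_\beta=\{x^\delta f_\alpha\in W_m : x^{\delta+\alpha}=x^\beta\}$. I want to show $g_\beta$, the unique element of $V_m$ with head term $x^\beta$, satisfies $g_\beta \preceq_m x^\delta f_\alpha$ for every $x^\delta f_\alpha\in W_\beta$. I would argue by induction on $m$. For $m=\alpha_J$ every element of $W_{\alpha_J}$ with head term $x^\beta\in B_J$ is $f_\beta$ itself (there is no proper monomial multiple available in the initial degree), so $W_\beta=\{g_\beta\}$ and there is nothing to prove — more precisely, if $x^\beta \in B_J$ then any $x^\delta f_\alpha$ with $x^{\delta+\alpha}=x^\beta$ has $x^\alpha \mid x^\beta$ with $x^\alpha\in B_J$, and since $x^\beta\in B_J$ we get $x^\delta=1$, $x^\alpha=x^\beta$. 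For the inductive step, write $g_\beta = x_i g_\epsilon$ with $x_i=\min(x^\beta)$, $x^\epsilon=x^\beta/x_i$, and let $x^\delta f_\alpha\in W_\beta$ with $x^\delta\neq 1$ (the case $x^\delta=1$, i.e. $x^\beta\in B_J$, is as above). Set $x_j=\min(x^\delta)$; then by Remark~\ref{rem:minimum} $\min(x_i x^\epsilon)=x_i=\min(x^\beta)\leq x_j$ since $x_j$ divides $x^\beta$. If $x_j>x_i$, Definition~\ref{order1}/\ref{order2} give $x^\delta f_\alpha \succ_m g_\beta$ directly (drl-comparison of $x^\delta$ against $x_i x^{\delta'}$ where $g_\epsilon = x^{\delta'}f_{\alpha'}$: the larger of the two initial-variable contents wins, and $x_i$ is the minimum variable of $g_\beta$'s monomial part). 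If $x_j=x_i$, write $x^\delta f_\alpha = x_i\cdot(x^{\delta/x_i} f_\alpha)$, where $x^{\delta/x_i}f_\alpha\in W_{\epsilon}\subseteq W_{m-1}$; by induction $x^{\delta/x_i}f_\alpha \succeq_{m-1} g_\epsilon$, and then part (i) lifts this to $x^\delta f_\alpha = x_i x^{\delta/x_i}f_\alpha \succeq_m x_i g_\epsilon = g_\beta$, as wanted.

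For part (iii), let $x^\delta f_\alpha\in W_m\setminus G_m$ and $x^\beta\in Supp(x^\delta x^\alpha)\setminus\{x^\delta x^\alpha\}$ with $g_\beta\in V_m$. Since $x^\beta$ is a term of $x^\delta x^\alpha - x^\delta f_\alpha = \sum c_{\alpha\gamma}x^{\delta+\gamma}$ with $x^\gamma\in\cN(J)$, we have $x^\beta = x^{\delta+\gamma}$ for some such $\gamma$; because $x^\beta\in J$ (as $g_\beta\in V_m$ has head term $x^\beta$) and $x^\gamma\in\cN(J)$, the monomial $x^\delta$ is nontrivial and in particular $\vert\delta\vert\geq 1$. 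The head term of $x^\delta f_\alpha$ is $x^{\delta+\alpha} >_{drl} x^{\delta+\gamma} = x^\beta$ because $x^\alpha >_{drl} x^\gamma$ (both have the same degree and $x^\alpha\in J$, $x^\gamma\in\cN(J)$ — here one uses the elementary fact that for strongly stable $J$, degree-reverse-lexicographically every term in $\cN(J)_d$ is smaller than every term in $J_d$, which follows from the definition of strongly stable, or alternatively just from $x^\alpha\neq x^\gamma$ being the defining marked pair; actually it suffices that $x^{\delta+\alpha}\neq x^\beta$ and both are head terms, but we need the direction of the inequality). Now I compare $x^\delta f_\alpha$ with $g_\beta$ in $W_m$: by part (ii), $g_\beta$ is $\preceq_m$-minimal in $W_\beta$; but $x^\delta f_\alpha$ need not lie in $W_\beta$, so instead I use Definition~\ref{order2} directly. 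Write $g_\beta = x^{\delta'}f_{\alpha'}$ as an element of $W_m$ with $x^{\delta'+\alpha'}=x^\beta$. Since $x^{\delta+\alpha} >_{drl} x^\beta = x^{\delta'+\alpha'}$, Definition~\ref{order2} immediately yields $x^\delta f_\alpha \succ_m g_\beta$.

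The main obstacle, and the only place that requires genuine care, is part (ii): getting the induction to interface correctly with the mixed definition of $\succeq_m$ on $V_m$ (Definition~\ref{order1}) versus on $W_m$ (Definition~\ref{order2}), and checking that these two orders agree on $V_m$ so that the inductive hypothesis for $V_{m-1}$ can be fed into part (i). Concretely, one must verify that for $g_\epsilon, g_\eta\in V_{m-1}$ the relation $g_\epsilon\succeq_{m-1}g_\eta$ in the sense of Definition~\ref{order1} coincides with the relation coming from viewing them inside $W_{m-1}$ via Definition~\ref{order2} — this is a compatibility check that the authors presumably fold into the set-up, and it is what makes the ``lift by part (i)'' step legitimate. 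Everything else is bookkeeping with $\min(\cdot)$, $>_{drl}$, and Remark~\ref{rem:minimum}.
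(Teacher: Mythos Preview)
Your treatments of (i) and (ii) match the paper's: (i) is compatibility of $>_{drl}$ with multiplication, and (ii) is the same induction on $m$ via $\min(x^\beta)$ and Remark~\ref{rem:minimum} that the paper sketches.

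Part (iii), however, has two genuine errors. First, the assertion that for strongly stable $J$ every term of $\cN(J)_d$ is $<_{drl}$ every term of $J_d$ is false. The paper's own Example~\ref{noGBasis} and the Appendix furnish a counterexample: with $J=(x^3,x^2y,xy^2,y^5)_{\geq 4}$ one has $xy^2z\in B_J$, $y^4\in\cN(J)_4$, yet $y^4>_{drl}xy^2z$; the Appendix even remarks that $J_4$ is not a segment for \emph{any} term order, since $(xy^2z)^2=x^2z^2\cdot y^4$ with both factors on the right in $\cN(J)$. So $x^\alpha\in B_J$ and $x^\gamma\in\cN(J)$ do not force $x^\alpha>_{drl}x^\gamma$. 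Second, and independently fatal: Definition~\ref{order2} compares the \emph{multipliers} $x^\delta$ and $x^{\delta'}$, not the head terms $x^{\delta+\alpha}$ and $x^{\delta'+\alpha'}$. Even if $x^{\delta+\alpha}>_{drl}x^{\delta'+\alpha'}$ held, this would tell you nothing about $x^\delta$ versus $x^{\delta'}$ (which need not even have the same degree, since $|\alpha|$ and $|\alpha'|$ can differ). Your last line, ``Definition~\ref{order2} immediately yields $x^\delta f_\alpha\succ_m g_\beta$'', is therefore a non sequitur.

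The paper's route for (iii) avoids head-term comparisons altogether. Writing $g_\beta=x^{\delta'}f_{\alpha'}$, it uses Remark~\ref{rem:minimum} to obtain $\min(x^{\delta'})=\min(x^\beta)=\min(x^{\delta+\gamma})\leq\min(x^\delta)$ and then argues by induction on $m$ exactly as in Lemma~\ref{reduction1}: when equality holds, one divides out the common smallest variable and descends to degree $m-1$.
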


\begin{proof}
(i) This follows by the analogous property of the term order $>_{drl}$.

(ii) The statement holds by construction of $V_m$ and by Remark \ref{rem:minimum}. Indeed, by the same arguments as before, if $x^\delta f_\alpha$ is any polynomial of $W_\beta$ and $g_\beta=x^{\delta'}f_{\alpha'}\in V_m$, with $\max(x^{\delta'})\leq \min(x^{\alpha'})$ as in Remark \ref{rem:minimum}, then $x_j=\min(x^{\delta'})=\min(x^{\delta'+\alpha'})=\min(x^{\delta+\alpha})\leq \min(x^\delta)$. If the equality holds, it is enough to observe that $\frac{x^{\delta}}{x_j}f_{\alpha} \in W_{m-1}$ and  $\frac{x^{\delta'}}{x_j}f_{\alpha'} \in V_{m-1}$ by construction.

(iii) The proof is analogous to the proof of Lemma \ref{reduction1}. If $x^\beta$ belongs to  $J_m$ we are done. Otherwise, note that every term of $Supp(x^\delta f_\alpha)$ is a multiple of $x^\delta$, in particular $x^{\delta'+\alpha'}=x^{\delta+\gamma}$ for some $x^\gamma \in \cN(J)$. Let $x_i=\min(x^{\delta})$ and $x_j=\min(x^{\delta'})$. 
By Remark \ref{rem:minimum}, we get $x_j=\min(x^{\delta'+\alpha'})=\min(x^{\delta+\gamma})\leq \min(x^\delta)=x_i$. 
If $x_j=x_i$, then ${x^\beta}/{x_i}$ belongs to the support of $\frac{x^\delta}{x_i}f_\alpha$. Now we use induction. 
\end{proof}

In Remark \ref{generic coordinate} we have already observed that in generic coordinates every homogeneous ideal has a $J$-marked basis, with $J$ strongly stable. Now, given a strongly stable ideal $J$, we describe a {\em Buchberger-like} algorithmic method to check if a $J$-marked set is a $J$-marked basis, recovering the well-known notion of {\em S-polynomial} from the Gr\"obner bases theory.

\begin{definition}
The {\em S-polynomial} of two elements $f_\alpha$, $f_{\alpha'}$ of a $J$-marked set $G$ is the polynomial $S(f_\alpha, f_{\alpha'}):=x^\beta f_\alpha-x^{\beta'}f_{\alpha'}$, where $x^{\beta+\alpha}=x^{\beta'+\alpha'}=lcm(x^\alpha,x^{\alpha'})$.
\end{definition}  

\begin{theorem}{\rm (Buchberger-like criterion)}\label{Buchberger}
Let $J$ be a strongly stable ideal and $I$ the homogeneous ideal generated by a $J$-marked set $G$. With the above notation:
$$I\in \cM(J) \Leftrightarrow \overline{S(f_\alpha, f_{\alpha'})}=0, \forall f_\alpha, f_{\alpha'} \in G.$$
\end{theorem}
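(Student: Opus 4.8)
The plan is to mimic the classical proof of Buchberger's criterion, using the Noetherian reduction relation $\stackrel{V_m}\longrightarrow$ on each graded piece $S_m$ in place of the (possibly non-Noetherian) relation $\stackrel{G}\longrightarrow$, and then translating everything into statements about Hilbert functions via Corollary \ref{characterization}. By that corollary, $I=(G)\in\cM(J)$ if and only if $\dim_K I_m\le \dim_K J_m$ for every $m$, equivalently $\dim_K I_m=\dim_K J_m$ for every $m$ (Corollary \ref{generatore} already gives the reverse inequality). Since $I_m$ is spanned by $W_m$ and the head terms of the elements of $V_m\subseteq W_m$ are precisely the terms of $J_m$, which are $\dim_K J_m$ in number, we always have $\dim_K I_m\le \dim_K J_m$ would follow once we know $V_m$ \emph{reduces} all of $W_m$ — that is, once every $x^\delta f_\alpha\in W_m$ reduces to $0$ via $\stackrel{V_m}\longrightarrow$. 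So the real content is: $\overline{S(f_\alpha,f_{\alpha'})}=0$ for all pairs if and only if every element of $W_m$ has $V_m$-reduction $0$, for every $m$.

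The forward implication ($I\in\cM(J)\Rightarrow$ all S-polynomials reduce to $0$) is the easy half: if $I\in\cM(J)$ then $J$-normal forms are unique by Proposition \ref{first properties}(iii), and $S(f_\alpha,f_{\alpha'})\in I$ obviously has $0$ as a $J$-normal form, so its $V_m$-reduction $\overline{S(f_\alpha,f_{\alpha'})}$, being a $J$-normal form modulo $I$ by Proposition \ref{construction of $J$-normal form} and the remark following it, must equal $0$. For the converse I would argue by contradiction: suppose all S-polynomials $V_m$-reduce to $0$ but $\dim_K I_m>\dim_K J_m$ for some $m$; take the least such $m$. Then $W_m$ is not entirely reduced by $V_m$, so pick a counterexample $h=\sum_k c_k\, x^{\delta_k} f_{\alpha_k}\in I_m$ with nonzero $V_m$-reduction, and among all such representations choose one minimizing the $\succeq_m$-maximal term $x^{\delta_{k_0}} f_{\alpha_{k_0}}$ appearing (using the total order from Definition \ref{order2} and Lemma \ref{reduction2}(i),(iii)). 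If the head terms $x^{\delta_k+\alpha_k}$ of the maximal summands do not all cancel, one can peel off $g_\beta\in V_m$ with that head term (Lemma \ref{reduction2}(ii) says $g_\beta$ is the $\preceq_m$-minimum of $W_\beta$) and lower the maximum; if they do cancel, the standard telescoping trick rewrites the top-degree part as a combination of S-polynomials $S(f_{\alpha_k},f_{\alpha_{k+1}})$, each of which — after multiplying by an appropriate monomial and invoking Lemma \ref{reduction2}(i) to keep order control, plus Lemma \ref{reduction1}/\ref{reduction2}(iii) — $V_m$-reduces to $0$ by hypothesis, again producing a representation with strictly smaller $\succeq_m$-maximum. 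This contradicts minimality, so $W_m$ is fully $V_m$-reducible and hence $\dim_K I_m\le\dim_K J_m$, giving $I\in\cM(J)$ by Corollary \ref{characterization}.

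The step I expect to be the main obstacle is making the S-polynomial cancellation argument rigorous in this graded, $V_m$-reduction setting rather than the usual Gröbner setting. Two subtleties arise: first, the S-polynomial $S(f_\alpha,f_{\alpha'})$ lives in degree $|\mathrm{lcm}(x^\alpha,x^{\alpha'})|$, which may be smaller than $m$, so I must multiply it by a monomial $x^\eta$ and then check that $\overline{x^\eta\cdot(\text{something in }I_{m'})}=0$ follows from $\overline{S(f_\alpha,f_{\alpha'})}=0$ — this needs the "reductions are preserved under multiplication" compatibility, which is exactly why Lemma \ref{reduction2}(i) is stated, but the bookkeeping must be done carefully since $V_{m'}$ and $V_m$ are different sets (one should argue that a reduction chain in degree $m'$ multiplied by $x^\eta$ yields a valid reduction chain in degree $m$ using only $V_m$, via Remark \ref{rem:minimum} and the inductive construction of $V_m$). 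Second, the telescoping identity $\sum_k c_k x^{\delta_k}f_{\alpha_k}=\sum$ (coefficient)$\cdot$(monomial)$\cdot S(f_{\alpha_{k}},f_{\alpha_{k+1}})$ plus a lower-order remainder is the classical computation, but I must verify that the monomials multiplying the S-polynomials, together with $x^\eta$, do not exceed $x^{\delta_{k_0}}$ in the $\succeq_m$ order — the degree reverse lexicographic choice in Definition \ref{order2} and the $\min$-bookkeeping of Remark \ref{rem:minimum} are designed precisely to make this work, and pinning down that estimate is the crux of the proof.
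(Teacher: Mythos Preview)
Your plan is essentially the paper's and will go through, but you have made it more elaborate than necessary and one intermediate claim is stated incorrectly.

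For the converse, the paper argues more directly: rather than taking an arbitrary $h=\sum_k c_k\, x^{\delta_k}f_{\alpha_k}\in I_m$ and minimizing the maximal summand over all representations, it shows by induction on $\succeq_m$ that each single element $x^\delta f_\alpha\in W_m$ is a $K$-linear combination of elements of $V_m$ strictly $\prec_m x^\delta f_\alpha$ (unless it already lies in $V_m$). This immediately gives $I_m=\langle V_m\rangle$, hence $\dim_K I_m\le |V_m|=\dim_K J_m$. Your telescoping case never arises, because only one element of $W_m$ is handled at a time: compare $x^\delta f_\alpha$ to the unique $g_\beta=x^{\delta'}f_{\alpha'}\in V_m$ with the same head term (Lemma \ref{reduction2}(ii)), and the difference $x^\delta f_\alpha-x^{\delta'}f_{\alpha'}$ is already a monomial multiple of a single $S$-polynomial---no cancellation-of-leading-terms bookkeeping is needed.

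The incorrect step in your sketch is the claim that ``a reduction chain in degree $m'$ multiplied by $x^\eta$ yields a valid reduction chain in degree $m$ using only $V_m$''. This is false in general: $x^\eta\cdot V_{m'}\not\subseteq V_m$, since $x^\eta g_\epsilon$ lies in $V_m$ only when $\max(x^\eta)\le\min(Ht(g_\epsilon))$ (Remark \ref{rem:minimum}). What \emph{is} true, and what the paper uses, is that $x^\eta\cdot V_{m'}\subseteq W_m$, and by Lemma \ref{reduction2}(i) each such product is $\prec_m x^\delta f_\alpha$; then the inductive hypothesis (in the paper's framing) or your minimality assumption applies to those $W_m$-elements. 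So the fix is simpler than you suggest: you need not stay inside $V_m$ at every step, only land on $W_m$-elements strictly below your starting point.
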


\begin{proof}
Recall that $I \in \cM(J)$ if and only if $G$ is a $J$-marked basis, so that every polynomial has a unique $J$-reduced form modulo $I$. Since $S(f_\alpha, f_{\alpha'})$ belongs to $I$ by construction, its $J$-reduced form modulo $I$ is zero and coincides with $\overline{S(f_\alpha, f_{\alpha'})}$, by the unicity of $J$-reduced forms.

For the converse, by Corollary~\ref{characterization} it is enough to show that, for every $m$, the $K$-vector space $I_m$ is generated by the $dim_K J_m$ elements of $V_m$.  More precisely we will show that every polynomial $x^\delta f_\alpha\in W_m$ either belongs to $V_m$ or is a $K$-linear combination of elements of $V_m$ lower than $x^\delta f_\alpha$ itself. We may assume that this fact holds for every polynomial in $W_m$ lower than $x^\delta f_\alpha$. If $x^\delta f_\alpha$ belongs to $V_m$ there is nothing to prove. If $x^\delta f_\alpha$ does not belong to $V_m$, let $x^{\delta'}f_{\alpha'}=\min(W_{\delta+\alpha})\in V_m$, so that $x^\delta f_\alpha \succ_m x^{\delta'} f_{\alpha'}$, and consider the polynomial $g=x^\delta f_\alpha - x^{\delta'} f_{\alpha'}$. 

If $g$ is the $S$-polynomial $S(f_\alpha,f_{\alpha'})$, then it is a $K$-linear combination $\sum c_i g_{\eta_i}$ of polynomials of $V_m$ because $\overline{S(f_\alpha, f_{\alpha'})}=0$ by the hypothesis. Moreover,  by construction, $x^{\delta'} f_{\alpha'}$ belongs to $V_m$ and, thanks to Lemma~\ref{reduction2}(iii), for all $i$ we have $x^\delta f_\alpha\succ_m g_{\eta_i}$.

If $g$ is not the $S$-polynomial $S(f_\alpha,f_{\alpha'})$, then there exists a term $x^\beta\not=1$ such that $g=x^\beta S(f_\alpha,f_{\alpha'})= x^\beta (x^\eta f_\alpha- x^{\eta'}f_{\alpha'})$. By the hypothesis $S(f_\alpha,f_{\alpha'})$ is a $K$-linear combination $\sum c_i g_{\eta_i}$ of elements of $V_{m-\vert \beta\vert}$ lower than $x^\eta f_\alpha$. Hence, $x^\delta f_\alpha= x^{\delta'}f_{\alpha'}+ \sum c_i x^\beta g_{\eta_i}$, where all polynomials appearing in the right hand are lower than $x^\delta f_\alpha$ with respect to $\succ_m$, by Lemma~\ref{reduction2}(i). So we can apply to them the inductive hypothesis for which either they are elements of $V_m$ or they are $K$-linear combinations of lower elements in $V_m$. This allows us to conclude the proof.
\end{proof}

Let $H=(h_1,\ldots,h_t)$ be a syzygy of a $J$-basis $G=\{f_{\alpha_1},\ldots,f_{\alpha_t}\}$ such that every polynomial $h_i=\sum c_{i\beta} x^{\beta}$ is homogeneous and every product $h_i f_{\alpha_i}$ has the same degree $m$.
A syzygy $M=(m_1,\ldots,m_t)$ of $J$ is homogeneous if, for every $1\leq i\leq t$, we have $m_ix^{\alpha_i}=c_{i\epsilon} x^\epsilon$, for a constant term $x^\epsilon$ and $c_{i\epsilon} \in K$. 

\begin{definition}\label{def:lifting}
The {\em head term $Ht(H)$ of the syzygy $H$} is the head term of the polynomial $H_{max}:=$ $\max_{\succeq_m}\{x^\beta f_{\alpha_i} : i\in\{1,\ldots,t\}, x^\beta \in Supp(h_i)\}$. If $Ht(H)=x^\eta$, let $H^+=(h_1^+,\ldots,h_t^+)$ be the $t$-uple such that $h_i^+=c_{i\beta}x^\beta$, where $x^\beta x^{\alpha_i}=x^\eta$, i.e. $x^\beta f_{\alpha_i}\in W_\eta$. Given a homogeneous syzygy $M$ of $J$, we say that $H$ is a {\em lifting} of $M$, or that $M$ {\em lifts to} $H$, if $H^+=M$.
\end{definition}

For the following result we refer to \cite{Mo,Sc}, in particular to Proposition 5.2 of \cite{Mo}.

\begin{corollary}\label{lifting of syzygies}
Every homogeneous syzygy of $J$ lifts to a syzygy of a $J$-marked basis $G$.
\end{corollary}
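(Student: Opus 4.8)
The plan is to exploit the Buchberger-like criterion of Theorem~\ref{Buchberger} together with the constructive description of $J$-normal forms via the sets $V_m$. Since $I\in\cM(J)$, we know that $\overline{S(f_{\alpha_i},f_{\alpha_j})}=0$ for every pair of generators, and that every polynomial of $W_m$ is a $K$-linear combination of the elements of $V_m$ which are lower than it with respect to $\succeq_m$ (this is exactly the content of the second half of the proof of Theorem~\ref{Buchberger}). The idea is to start from a given homogeneous syzygy $M=(m_1,\dots,m_t)$ of $J$, where $m_ix^{\alpha_i}=c_{i\epsilon}x^\epsilon$ for a single term $x^\epsilon\in J$, and to build a syzygy $H=(h_1,\dots,h_t)$ of $G$ with $H^+=M$, i.e.\ whose ``top part'' is exactly $M$.

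First I would set up the correction-term machinery. The candidate $H^{(0)}:=M=(m_1,\dots,m_t)$ satisfies $\sum_i m_i f_{\alpha_i}=\sum_i c_{i\epsilon}x^{\beta_i}f_{\alpha_i}$ where $x^{\beta_i}x^{\alpha_i}=x^\epsilon$; since $M$ is a syzygy of $J$ this sum has head term strictly below $x^\epsilon$ in the order $\succeq_m$, more precisely it is a $K$-linear combination of terms in $\cN(J)$ plus lower $W_m$-contributions. The key point is that $\sum_i m_i f_{\alpha_i}\in I_m$, so by Corollary~\ref{characterization} and the proof of Theorem~\ref{Buchberger} it can be rewritten as a $K$-linear combination $\sum_k d_k g_{\eta_k}$ of elements of $V_m$, each of which has head term $x^{\eta_k}\prec x^\epsilon$ (strictly lower than the head term of $H^{(0)}$). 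Writing each $g_{\eta_k}=x^{\delta_k}f_{\alpha_{j_k}}\in W_m$, we subtract: put $H^{(1)}:=H^{(0)}-\sum_k d_k(e_{j_k}x^{\delta_k})$, where $e_j$ is the $j$-th standard basis vector. Then $\sum_i h_i^{(1)}f_{\alpha_i}=0$ would already finish, but in general this only lowers the head term: by Lemma~\ref{reduction2}(i) and the analysis of $V_m$, the head term of the new combination $\sum_i h_i^{(1)}f_{\alpha_i}$ is strictly smaller. Because $W_m$ is a finite set and $\succeq_m$ is a total order, iterating this process terminates, producing $H$ with $\sum_i h_i f_{\alpha_i}=0$, i.e.\ a genuine syzygy of $G$.

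It remains to check that $H^+=M$, i.e.\ that the successive corrections never touch the top part $x^\epsilon$. This is where one must be careful: every correction term $d_k x^{\delta_k}f_{\alpha_{j_k}}$ introduced at each stage lies in $W_{\eta_k}$ with $x^{\eta_k}\prec x^\epsilon=Ht(H^{(0)})$, so adding it changes only the components $h_i$ in head terms $\prec x^\epsilon$; hence after all corrections $h_i^+$, the part of $h_i$ supported on $x^{\beta_i}$ with $x^{\beta_i}x^{\alpha_i}=x^\epsilon$, remains exactly $m_i$. I would phrase this as: throughout the induction $Ht(H^{(r)})=x^\epsilon$ with $(H^{(r)})^+=M$ for all $r$ until termination, using Definition~\ref{def:lifting} and Lemma~\ref{reduction2}(iii).

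The main obstacle I anticipate is the bookkeeping of head terms in the iteration — specifically, verifying rigorously that each rewriting step strictly decreases the head term $Ht(\sum_i h_i^{(r)}f_{\alpha_i})$ in the order $\succeq_m$ while leaving $H^+$ untouched, so that the process both terminates and converges to a lifting of the \emph{prescribed} $M$ rather than of some other syzygy of $J$. This requires combining the minimality of $V_m$-elements in their $W_\beta$-classes (Lemma~\ref{reduction2}(ii)) with the strict-drop statement of Lemma~\ref{reduction2}(iii); once that is in place, finiteness of $W_m$ closes the argument. A secondary, purely formal point is to note that it suffices to lift a set of \emph{generators} of the syzygy module of $J$ (taking $K$-linear combinations and multiplying by monomials is compatible with the lifting via part (i) of Lemma~\ref{reduction2}), but since the statement quantifies over every homogeneous syzygy, the direct construction above handles the general case uniformly.
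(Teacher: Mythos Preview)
Your approach is correct and uses the same machinery as the paper, but you do more work than necessary. The paper's proof is two lines: it observes that the Koszul-type syzygies $(0,\dots,x^\beta,\dots,-x^{\beta'},\dots,0)$ (with $x^{\beta+\alpha}=x^{\beta'+\alpha'}=\mathrm{lcm}(x^\alpha,x^{\alpha'})$) generate the homogeneous syzygy module of $J$, and then simply invokes Theorem~\ref{Buchberger}---since each $S$-polynomial has $V_m$-reduction zero, each generating syzygy lifts. You instead treat an arbitrary homogeneous syzygy $M$ directly by correcting $H^{(0)}=M$; this is the argument the paper leaves implicit, and (as you note in your last paragraph) it is not strictly needed once the generators are lifted.

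Two minor points. First, your iteration is redundant: because $G$ is a $J$-marked basis, $V_m$ is a $K$-basis of $I_m$ (this is precisely what the second half of the proof of Theorem~\ref{Buchberger} shows), so $\sum_i m_i f_{\alpha_i}=\sum_k d_k g_{\eta_k}$ holds exactly and $H^{(1)}$ is already a syzygy of $G$---your sentence ``in general this only lowers the head term'' is not accurate, the process terminates in one step. Second, when you write ``$x^{\eta_k}\prec x^\epsilon$'' you are conflating the order $\succeq_m$ on $W_m$ with an order on terms; what Lemmas~\ref{reduction1} and~\ref{reduction2} give you is $g_{\eta_k}\prec_m (M)_{\max}$, and it is this (together with $Ht((M)_{\max})=x^\epsilon$) that yields $Ht(H)=x^\epsilon$. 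The paper's terse ``apply Theorem~\ref{Buchberger}'' sidesteps this bookkeeping entirely by working only with the generating Koszul syzygies.
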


\begin{proof}
Recall that syzygies of type $(0,\ldots,x^\beta,\ldots,-x^{\beta'},0,\ldots)$ form a system of homogeneous generators of syzygies of $B_J=\{\ldots,x^{\alpha},\ldots,x^{\alpha'},\ldots\}$, where $x^{\beta+\alpha}=x^{\beta'+\alpha'}=lcm(x^{\alpha},x^{\alpha'})$. Thus, apply Theorem \ref{Buchberger}.
\end{proof}

The analogous result of Corollary \ref{lifting of syzygies} for involutive bases immediately holds and it is believable that Janet was aware of that property of involutive polynomials sets.

Until now we have shown that a $J$-marked basis satisfies the characterizing properties of a Gr\"obner basis. In the following result we consider a property that does not characterize Gr\"obner bases, but it is satisfied by Gr\"obner bases. We show that it is satisfied by $J$-marked bases too, by standard arguments. 

\begin{corollary} \label{syzygies generators}
Let $\{M_1,\ldots,M_t\}$ be a set of homogeneous generators of the module of syzygies of $J$. Then, a set $\{K_1,\ldots,K_t\}$ of liftings of the $M_i$'s generates the module of syzygies of $G$.  
\end{corollary}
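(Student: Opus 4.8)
The plan is to mimic the classical Gröbner-basis argument (Schreyer-type lifting of syzygies) adapted to the order $\succeq_m$ and to the normal-form machinery already built. Fix a degree $m$ and let $H = (h_1,\dots,h_t)$ be an arbitrary homogeneous syzygy of $G$ with every $h_i f_{\alpha_i}$ of degree $m$; it is enough to show $H$ lies in the submodule generated by the $K_i$'s, and I would proceed by downward induction on $Ht(H)$ with respect to $\succeq_m$ (more precisely, on the pair consisting of the head term and the number of summands achieving the maximum in the definition of $H_{max}$). First I would observe that $H^+$, the ``top part'' of $H$ defined in Definition \ref{def:lifting}, is by construction a homogeneous syzygy of $J$: the vanishing of the coefficient of $x^\eta = Ht(H)$ in $\sum h_i f_{\alpha_i} = 0$ forces $\sum h_i^+ x^{\alpha_i} = 0$. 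Since $\{M_1,\dots,M_t\}$ generates the syzygies of $J$, we may write $H^+ = \sum_j p_j M_j$ with each $p_j$ a term (after splitting into homogeneous pieces), and we can arrange that $p_j K_j$ has head term $\preceq_m x^\eta$ for every $j$, using Lemma \ref{reduction2}(i) and Definition \ref{def:lifting} to control how head terms behave under multiplication by a term.

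Next I would form $H' := H - \sum_j p_j K_j$. This is again a homogeneous syzygy of $G$ in degree $m$ (the $K_j$ being syzygies of $G$), and the point is that its head term is strictly smaller: by construction the ``$x^\eta$-component'' of $\sum_j p_j K_j$ equals $\sum_j p_j M_j = H^+$, which is exactly the $x^\eta$-component of $H$, so these cancel in $H'$. Here one must be slightly careful: forming $p_j K_j$ may reintroduce terms at level $x^\eta$ coming from the non-head parts of the $K_j$, but these are controlled because, by Lemma \ref{reduction2}(iii) and Corollary \ref{lifting of syzygies}, every $x^\beta f_{\alpha_i}$ appearing below the head term of $K_j$ reduces (via $V_m$) with strictly smaller $\succeq_m$-order; iterating the subtraction of multiples of the $K_j$'s a finite number of times (finiteness because each $V_{m'}$ is finite and $\succeq_m$ is a total order with no infinite descending chains in a fixed degree, cf. Proposition \ref{construction of $J$-normal form}) we may assume $Ht(H') \prec_m Ht(H)$. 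By the inductive hypothesis $H'$ lies in the submodule generated by $\{K_1,\dots,K_t\}$, hence so does $H = H' + \sum_j p_j K_j$.

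The base case is when $Ht(H)$ is minimal, i.e.\ $H_{max} \in V_m$: then $H^+$ is a syzygy of $J$ supported on a single pair of generators of the type appearing in Corollary \ref{lifting of syzygies}, and the argument above still applies verbatim, terminating because after finitely many reductions the syzygy becomes $0$ (a syzygy of $G$ all of whose $x^\beta f_{\alpha_i}$ lie in $V_m$, with vanishing top coefficients, must be zero by the $K$-linear independence of the head terms of the elements of $V_m$, which holds since $G$ is a $J$-marked basis and $\cN(J)$ is free in $S/I$).

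I expect the main obstacle to be the bookkeeping in the inductive step: making precise the claim that subtracting the $p_j K_j$ strictly decreases $Ht(H)$ and that only finitely many such subtractions are needed before one can invoke induction. This is exactly the place where one needs the full strength of Lemma \ref{reduction2} — both the compatibility of $\succeq_m$ with multiplication by terms (part (i)) and the fact that the non-leading terms of a lifted polynomial are $\succ_m$-dominated by its head term (part (iii)) — together with the finiteness of $V_m$ established in Proposition \ref{construction of $J$-normal form}. Once this termination/decrease statement is nailed down, the rest is the standard Schreyer argument and goes through as for ordinary Gröbner bases.
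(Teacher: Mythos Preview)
Your proposal is correct and follows essentially the same Schreyer-type approach as the paper: take $H^+$, express it in terms of the $M_i$, subtract the corresponding combination of the $K_i$, and argue that the ``leading part'' strictly decreases so that the process terminates. The paper streamlines your argument in one respect: instead of inducting on the pair $(Ht(H),\text{number of maximal summands})$ and worrying about terms reintroduced at level $x^\eta$, it inducts directly on $H_{\max}\in W_m$ with respect to $\preceq_m$, invoking Lemmas~\ref{reduction1} and~\ref{reduction2} to get $H_{\max}(H_1)\prec_m H_{\max}(H)$ in a single step and then simply using that $(W_m,\preceq_m)$ is a finite total order---this absorbs both your iterative subtraction and your base case into the well-foundedness of $\preceq_m$ on $W_m$.
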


\begin{proof} 
First, observe that the module of syzygies of $G=\{f_{\alpha_1},\ldots,f_{\alpha_t}\}$ is generated by the syzygies $H=(h_1,\ldots,h_t)$ such that every $h_i=\sum c_{i\beta} x^{\beta}$ is a homogeneous polynomial and every product $h_i f_{\alpha_i}$ has the same degree $m$. Let $H^+$ the syzygy of $J$, as computed in Definition \ref{def:lifting}. Hence, there exist homogeneous polynomials $q_1,\ldots,q_t$ such that $H^+=\sum q_i M_i$. Let $H_1=H-\sum q_i K_i$. By construction we get that $H_{max}(H_1) \prec_m H_{max}(H)$, by Lemmas \ref{reduction1} and \ref{reduction2}. Since $\preceq_m$ is a total order on the finite set $W_m$, the proof is complete.
\end{proof}

\begin{remark}\label{solom0} In the proof of Theorem \ref{Buchberger} we do not use  $V_m$-reductions of all $S$-polynomials $x^\delta f_{\alpha}-x^{\delta'} f_{\alpha'}$ of elements in $G$, but only of those such that either $x^\delta f_{\alpha}$ or $x^{\delta'} f_{\alpha'}$ belongs to some $V_m$. Moreover, we can consider the analogous property to that of the improved Buchberger algorithm that only considers $S$-polynomials corresponding to a set of generators for the syzygies of $J$. Thus we can improve Corollary~\ref{characterization} and say that, under the same hypotheses: 
$$I \in \cM(J) \Longleftrightarrow  \forall m\leq m_0, \  \dim_K I_m=\dim_K J_m \Longleftrightarrow  \forall m\leq m_0, \  \dim_K I_m\leq \dim_K J_m
$$
where $m_0$ is the maximum degree of generators of syzygies of $J$. 
Hence, to prove that $\dim_K I_m=\dim_K J_m $ for some $m$ it is sufficient that the $V_m$-reductions of the $S$-polynomials of degree $\leq m$ are null.
\end{remark}

\begin{example} Let $J=(z^2,zy,zx,y^2)\subset K[x,y,z]$, where $x<y<z$ and consider a $J$-marked set $G=\{f_{z^2},f_{zy}, f_{zx}, f_{y^2}\}$. In order to check whether $G$ is a $J$-marked basis it is sufficient to verify if the polynomials  $S(f_{z^2},f_{zy})$, $S(f_{z^2},f_{zx})$, $S(f_{z^2},f_{y^2})$, $S(f_{zy},f_{zx})$ and $S(f_{zy},f_{y^2})$ have $V_m$-reductions null, but it is not necessary to controll $S(f_{zx},f_{y^2})$ because $yxf_{zy}$ is the element of $V_3$ with head term $zy^2x$.
\end{example}

\begin{example}\label{noGBasis} 
Let $J=(z^3,z^2y,zy^2,y^5)_{\geq 4}$ be a strongly stable ideal in $K[x,y,z]$, with $x<y<z$, and $G=B_J\cup \{f\}\setminus \{zy^2x\}$ a $J$-marked set, where $f=zy^2x-y^4-z^2x^2$ with $Ht(f)=zy^2x$. We can verify that $G$ is a $J$-marked basis using the Buchberger-like criterion proved in Theorem~\ref{Buchberger}. Indeed, the $S$-polynomials non involving $f$ vanish and all the $S$-polynomials involving $f$ are multiple of either $z\cdot(y^4+z^2x^2)$ or $y\cdot(y^4+z^2x^2)$. Since the terms $y^4\cdot z$, $y^4\cdot y$, $z^2x^2\cdot z$, $ z^2x^2\cdot y$ belong to $V_5$, all the $S$-polynomials have $V_m$-reductions null. Notice also that, in this case, $\stackrel{G}\longrightarrow$ is not Noetherian because, although the $V_7$-reduction of $z^2y^2x^3$ is $0$, being $x^2\cdot z^2y^2x \in V_7$ (while $zxf \notin V_7$), a different choice of reduction gives the loop:
$$z^2y^2x^3 \stackrel{f}\longrightarrow zy^4x^2 +z^3x^4  \stackrel{ z^3x^2}\longrightarrow  zy^4x^2  \stackrel{f}\longrightarrow y^6x+z^2y^2x^3 \stackrel{ y^5}\longrightarrow z^2y^2x^3.$$
Morover, $G$ is not a Gr\"obner basis with respect to any term order $\prec$. Indeed, $zy^2x^2\succ y^4x$ and $zy^2x^2 \succ z^2x^3$ would be in contradiction with the equality  $(zy^2x^2)^2 =z^2x^3\cdot y^4x$.
\end{example}


\section{$J$-marked families as affine schemes}

In this section {\em $J$ is always supposed strongly stable}, so that we can use all results described in the previous sections for $J$-marked bases. 

Here we provide the construction of an affine scheme whose points correspond, one to one, to the ideals of the $J$-marked family $\cM(J)$. Recall that $\cM(J)$ is the family of all homogeneous ideals $I$ such that $\cN(J)$ is a basis for $S/I$ as a $K$-vector space, hence $\cM(J)$ contains all homogeneous ideals for which $J$ is the initial ideal with respect to a fixed term order. We generalize to any strongly stable ideal $J$ an approach already proposed in literature in case $J$ is considered an initial ideal (e.g. \cite{CF,FR,LR,Ro,RT}). 

For every $x^\alpha \in B_J$, let $F_\alpha:=x^\alpha - \sum C_{\alpha\gamma} x^\gamma$, where $x^\gamma$ belongs to $\cN(J)_{\vert \alpha \vert}$ and the $C_{\alpha\gamma}$'s are new variables. Let $C$ be the set of such new variables and $N:=\vert C\vert$. The set $\mathcal G$ of all the polynomials $F_\alpha$ becomes a $J$-marked set letting $Ht(F_\alpha)=x^\alpha$. From $\mathcal G$ we can obtain the $J$-marked basis of every ideal $I\in \cM(J)$ specializing in a unique way the variables $C$ in $K^N$, since  every ideal $I\in \cM(J)$ has a unique $J$-marked basis (Remark \ref{rem:unique} and Corollary \ref{cor:ultimo}). But not every specialization gives rise to an ideal of $\cM(J)$. 

Let $\mathcal V_m$ be the analogous for $\mathcal G$ of $V_m$ for any $G$.  
Let $H_{\alpha\alpha'}$ be the $\mathcal V_m$-reductions of the $S$-polynomials $S(F_\alpha,F_{\alpha'})$ of elements of $\mathcal G$ and extract their coefficients that are polynomials in $K[C]$. We will denote by $\mathfrak R$ the ideal of $K[C]$ generated by these coefficients. Let $\mathfrak R'$ be the ideal of $K[C]$ obtained in the same way of $\mathfrak R$ but only considering $S$-polynomials $S(F_\alpha,F_{\alpha'})=x^\delta F_\alpha-x^{\delta'}F_{\alpha'}$ such that $x^\delta F_\alpha$ is minimal among those with head term $x^{\delta+\alpha}$.

\begin{theorem}\label{affine scheme}
There is a one to one correspondence between the ideals of $\cM(J)$ and the points  of the affine scheme  in $K^N$ defined by the ideal $\mathfrak R$. Moreover,  $\mathfrak R'=\mathfrak R$. 
\end{theorem}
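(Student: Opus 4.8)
The plan is to prove the two assertions separately, the first by specialization and the second by a reduction-ordering argument. For the one-to-one correspondence: an ideal $I\in\cM(J)$ has, by Corollary~\ref{cor:ultimo} and Remark~\ref{rem:unique}, a \emph{unique} $J$-marked basis $\{f_\alpha\}$, and each $f_\alpha$ is obtained from $F_\alpha$ by a well-defined specialization $\sigma\colon C\to K^N$; this gives a map $\cM(J)\to K^N$. Conversely, given $\sigma\in K^N$, the specialized polynomials $\{\sigma(F_\alpha)\}$ form a $J$-marked set $G_\sigma$, hence generate an ideal $(G_\sigma)$; I would show that $(G_\sigma)\in\cM(J)$ if and only if $\sigma$ annihilates every coefficient appearing in every $H_{\alpha\alpha'}$, i.e.\ if and only if $\sigma$ is a point of the scheme $V(\mathfrak R)$. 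The ``if'' direction is the Buchberger-like criterion (Theorem~\ref{Buchberger}): the construction of $\mathcal V_m$ commutes with specialization (the routine \textsc{VConstructor} only multiplies by variables and tracks head terms, which do not change under specialization), so $\sigma(H_{\alpha\alpha'})$ is precisely the $V_m$-reduction of $S(f_\alpha,f_{\alpha'})$ computed by \textsc{NormalFormConstructor}; if all coefficients of $H_{\alpha\alpha'}$ lie in the kernel of $\sigma$, all these reductions vanish, so $G_\sigma$ is a $J$-marked basis and $(G_\sigma)\in\cM(J)$. For the ``only if'' direction, if $(G_\sigma)\in\cM(J)$ then $G_\sigma$ is a $J$-marked basis, every $J$-normal form modulo $(G_\sigma)$ is unique, the $S$-polynomials lie in $(G_\sigma)$ and hence reduce to $0$, so $\sigma(H_{\alpha\alpha'})=0$ for every pair; as this holds for every point $\sigma$ of the set of such $(G_\sigma)$, every coefficient polynomial vanishes on that set. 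Finally, the two maps are mutually inverse because the $J$-marked basis of $(G_\sigma)$ is $G_\sigma$ itself (Remark~\ref{rem:unique}), and because a specialization is determined by the image ideal through its uniquely determined $J$-marked basis.

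For the equality $\mathfrak R'=\mathfrak R$: clearly $\mathfrak R'\subseteq\mathfrak R$, since $\mathfrak R'$ uses a subset of the $S$-polynomials. For the reverse inclusion it suffices to show that $V(\mathfrak R')\subseteq V(\mathfrak R)$ as sets \emph{and} that equality of the defining ideals actually holds; in fact the cleaner route is to show directly that every coefficient of every $H_{\alpha\alpha'}$ already lies in $\mathfrak R'$. This is an algebraic transcription of Remark~\ref{solom0}: the argument in the proof of Theorem~\ref{Buchberger} only ever uses $V_m$-reductions of $S$-polynomials $x^\delta f_\alpha-x^{\delta'}f_{\alpha'}$ in which one of the two summands is minimal among elements of $W_m$ with its head term, i.e.\ lies in $V_m$. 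Concretely, I would run the induction of that proof with $F_\alpha$ in place of $f_\alpha$ and $\mathcal V_m$ in place of $V_m$, working modulo $\mathfrak R'$ rather than assuming the reductions are literally zero: the inductive claim becomes ``every $x^\delta F_\alpha\in\mathcal W_m$ is congruent modulo $\mathfrak R'[C]$ to a $K[C]$-linear combination of elements of $\mathcal V_m$ with strictly lower $\succeq_m$-class''. The step that uses $\overline{S(F_\alpha,F_{\alpha'})}=0$ is replaced by ``$H_{\alpha\alpha'}$ expresses $S(F_\alpha,F_{\alpha'})$ as a $K[C]$-combination of $\mathcal V_m$ modulo $\mathfrak R'$'', which is exactly the definition of $\mathfrak R'$ once one checks, via Lemma~\ref{reduction2}(ii), that the $S$-polynomials retained in $\mathfrak R'$ are precisely those for which the reduction process of \textsc{NormalFormConstructor} can be carried out entirely inside $\mathcal V_m$. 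Since multiplying by a term $x^\beta$ preserves the $\succeq_m$-order (Lemma~\ref{reduction2}(i)), the non-minimal $S$-polynomials $x^\beta S(F_\alpha,F_{\alpha'})$ contribute nothing new. Feeding this back, the coefficients of every $H_{\alpha\alpha'}$ lie in $\mathfrak R'$, so $\mathfrak R\subseteq\mathfrak R'$.

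I expect the main obstacle to be the bookkeeping in the second part: making precise that the reduction algorithm on $\mathcal G$ is ``generic'', i.e.\ that the sequence of head terms cancelled by \textsc{NormalFormConstructor}, and hence the shape of the $H_{\alpha\alpha'}$ as polynomials in $K[C]$, does not depend on $C$ (only the coefficients do), and that this genericity is exactly what lets the proof of Theorem~\ref{Buchberger} run verbatim with coefficients in $K[C]$ modulo $\mathfrak R'$. One must be careful that the minimality condition defining $\mathfrak R'$ ($x^\delta F_\alpha$ minimal in $\mathcal W_{\delta+\alpha}$) is the same condition that, in the proof of Theorem~\ref{Buchberger}, singles out which $g=x^\delta f_\alpha-x^{\delta'}f_{\alpha'}$ one needs to know reduces to zero; checking this alignment, and that the two remaining cases of that proof (``$g$ is an $S$-polynomial'' and ``$g=x^\beta S(f_\alpha,f_{\alpha'})$'') both stay inside $\mathfrak R'$, is where the real work lies. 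The first part, by contrast, is essentially a formalization of ``specialization commutes with the algorithm,'' which is routine once stated carefully.
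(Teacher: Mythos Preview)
Your treatment of the first assertion is exactly the paper's: both invoke the Buchberger-like criterion (Theorem~\ref{Buchberger}) and the observation that the construction of $\mathcal V_m$ and the routine \textsc{NormalFormConstructor} are $K[C]$-linear, hence commute with any specialization $C\to K^N$; the paper compresses this into one sentence, but the content is the same.

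For $\mathfrak R'=\mathfrak R$ the paper takes a shorter route than yours. Rather than rerunning the entire induction of Theorem~\ref{Buchberger} with generic coefficients modulo $\mathfrak R'$, the paper simply decomposes an arbitrary $S$-polynomial $x^\delta F_\alpha-x^{\delta'}F_{\alpha'}$ through the minimal element $x^{\delta''}F_{\alpha''}\in\mathcal V_m$ with the same head term, writing it as the sum
\[
\bigl(x^\delta F_\alpha-x^{\delta''}F_{\alpha''}\bigr)+\bigl(x^{\delta''}F_{\alpha''}-x^{\delta'}F_{\alpha'}\bigr),
\]
and then argues that the coefficients of the $\mathcal V_m$-reduction of the left-hand side are $K[C]$-combinations of those coming from the two summands, each of which is of the ``special'' type defining $\mathfrak R'$. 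This is in effect a single step of the induction you propose. Your approach is correct and in fact more careful: you explicitly check (via Remark~\ref{rem:minimum} and Lemma~\ref{reduction2}) that even in the case $g=x^\beta S(F_\alpha,F_{\alpha'})$ the lower-degree $S$-polynomial is again of the special type, a point the paper's terse argument leaves implicit. What the paper's decomposition buys is brevity; what your full inductive transcription buys is a transparent verification that the relevant $S$-polynomials really are the ones retained in $\mathfrak R'$, and that linearity of $\mathcal V_m$-reduction over $K[C]$ is the mechanism that transports the argument from $K$ to $K[C]/\mathfrak R'$.
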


\begin{proof}
For the first assertion it is enough to apply Theorem \ref{Buchberger}, observing that a specialization of the variables $C$ in $K^N$ gives rise to a $J$-marked basis if and only if the values chosen for the variables $C$ form a point of $K^N$ on which all polynomials of the ideal $\mathfrak R$ vanish. 

For the second assertion, first recall that, by Remark \ref{solom0}, every $S$-polynomial $x^\delta F_{\alpha}-x^{\delta'} F_{\alpha'}$ can be written as the sum $(x^\delta F_{\alpha}-x^{\delta''} F_{\alpha''})+(x^{\delta''} F_{\alpha''}-x^{\delta'} F_{\alpha'})$ of two $S$-polynomials, where $x^{\delta''} f_{\alpha''}$ belongs to $V_m$. Note that, considering the variables $C$ as parameters, the support of $x^\delta F_{\alpha}-x^{\delta'} F_{\alpha'}$ is contained in the union of the supports of $x^\delta F_{\alpha}-x^{\delta''} F_{\alpha''}$ and of $x^{\delta''} F_{\alpha''}-x^{\delta'} F_{\alpha'}$. In particular, the coefficients in $x^\delta F_{\alpha}-x^{\delta'} F_{\alpha'}$, i.e. the generators of $\mathfrak R$, are combinations of the coefficients in $(x^\delta F_{\alpha}-x^{\delta''} F_{\alpha''})+(x^{\delta''} F_{\alpha''}-x^{\delta'} F_{\alpha'})$, i.e. of the generators of $\mathfrak R'$.
\end{proof}

Now, by exploiting ideas of \cite{LR}, we show how to obtain $\mathfrak R$ in a different way, using the rank of some matrices. 

By Corollary \ref{characterization}, a specialization $C \rightarrow c\in K^N$ trasforms $\mathcal G$ in a $J$-basis $G$ if and only if $dim_K (G)_m = dim_K J_m$, for every degree $m$. Thus, for each $m$, consider the matrix $A_m$ whose columns correspond to the terms of degree $m$ in $S=K[x_1,\ldots,x_n]$ and whose rows contain the coefficients of the terms in every polynomial of degree $m$ of type $x^\delta F_\alpha$. Hence, every entry of the matrix $A_m$ is $1$, $0$ or one of the variables $C$. Let $\mathfrak A$ be the ideal of $K[C]$ generated by the minors of order $dim_K J_m + 1$ of $A_m$, for every $m$.  

\begin{lemma}\label{idealeU}
The ideal $\mathfrak A$ is equal to the ideal $\mathfrak R'$.
\end{lemma}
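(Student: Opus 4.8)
The plan is to show the two ideals $\mathfrak A$ and $\mathfrak R'$ define the same closed subscheme of $\mathbb A^N_K = \operatorname{Spec} K[C]$ by showing they cut out the same locus degree by degree, and in fact coincide as ideals. The key observation, already recorded in the proof of Theorem~\ref{affine scheme} via Remark~\ref{solom0}, is that $\mathfrak R'$ is generated by the coefficients (as polynomials in $K[C]$) of the $\mathcal V_m$-reductions $H_{\alpha\alpha'}$ of those $S$-polynomials $S(F_\alpha,F_{\alpha'}) = x^\delta F_\alpha - x^{\delta'}F_{\alpha'}$ for which $x^\delta F_\alpha$ is the $\preceq_m$-minimal element of $W_{\delta+\alpha}$, i.e. $x^\delta F_\alpha \in \mathcal V_m$. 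So I must compare, in each degree $m$, the ideal generated by the order-$(\dim_K J_m+1)$ minors of $A_m$ with the ideal generated by these reduction coefficients.

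First I would fix $m$ and analyse the matrix $A_m$. Its rows are indexed by the marked polynomials $x^\delta F_\alpha$ of degree $m$ (that is, the elements of $W_m$), its columns by the terms of $S_m$. Over the polynomial ring $K[C]$, performing row operations corresponds exactly to forming $K[C]$-linear combinations of the $x^\delta F_\alpha$'s, hence to the reduction process of Section~3: using the rows coming from $\mathcal V_m$ (whose head terms are precisely the $\dim_K J_m$ terms of $J_m$, each occurring in exactly one such row, by Definition~\ref{def:$V_m$} and Lemma~\ref{reduction2}(ii)), I can clear, from every other row $x^\delta F_\alpha \in W_m \setminus \mathcal V_m$, all occurrences of terms in $J_m$; by Lemma~\ref{reduction2}(iii) this process terminates and the result is a row supported only on $\cN(J)_m$, whose entries are exactly the coefficients of the $\mathcal V_m$-reduction of $x^\delta F_\alpha - g_\beta$ where $x^\beta = x^{\delta+\alpha}$ and $g_\beta \in \mathcal V_m$ — i.e. precisely a generator-batch of $\mathfrak R'$. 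After these row operations $A_m$ is transformed, without changing its ideal of $(\dim_K J_m+1)$-minors, into a block form: $\dim_K J_m$ pivot rows (one pivot $1$ in each $J_m$-column) and the remaining rows supported on $\cN(J)_m$ with entries the $\mathfrak R'$-generators coming from degree $m$. The ideal of maximal-plus-one minors of such a matrix equals the ideal generated by all entries of the non-pivot rows, because any $(\dim_K J_m + 1)$-minor either vanishes (if it omits a pivot column) or, up to sign, expands along the pivot columns to a single entry of a non-pivot row. Summing over all $m$ gives $\mathfrak A = \mathfrak R'$.

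The step I expect to be the main obstacle is making precise that the row-reduction just described can be carried out over $K[C]$ without introducing denominators and without altering the ideal of $(\dim_K J_m+1)$-minors: the pivots used are the leading coefficient $1$'s of the elements of $\mathcal V_m$, so the row operations are of the admissible form ``row $\mapsto$ row $-\,(\text{polynomial in }C)\cdot(\text{pivot row})$'', which indeed preserves all minor-ideals; one must check that the termination argument of Proposition~\ref{construction of $J$-normal form}, Lemmas~\ref{reduction1} and~\ref{reduction2}, which was stated for a specialized $J$-marked set $G$, applies verbatim with the generic coefficients $C_{\alpha\gamma}$ in place of the $c_{\alpha\gamma}$, since those arguments only used the combinatorics of head terms and the strong stability of $J$, not the particular field values. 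A secondary point needing care is the bookkeeping that the coefficients appearing after full reduction of $x^\delta F_\alpha$ for $x^\delta F_\alpha \notin \mathcal V_m$ are exactly the $H_{\alpha\alpha'}$-coefficients generating $\mathfrak R'$ and not some larger or smaller set; this follows from the fact that $x^\delta F_\alpha - g_\beta$ (with $g_\beta$ the $\mathcal V_m$-element of head term $x^{\delta+\alpha}$) is precisely the kind of $S$-polynomial retained in the definition of $\mathfrak R'$, again by Remark~\ref{solom0}.
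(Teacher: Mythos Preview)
Your approach is essentially the paper's: both identify the $a_m\times a_m$ upper-triangular block of $A_m$ coming from $\mathcal V_m$, perform Gaussian elimination against it (which is exactly $\mathcal V_m$-reduction over $K[C]$), and read off the $(a_m{+}1)$-minor ideal from the cleared rows. One small imprecision in your ``secondary point'' (which the paper's terse proof shares): for a general row $x^\delta F_\alpha\in W_m\setminus\mathcal V_m$, the difference $x^\delta F_\alpha-g_\beta$ is only a \emph{multiple} $x^\eta S(F_\alpha,F_{\alpha''})$ of an $S$-polynomial, not literally one of the special $S$-polynomials defining $\mathfrak R'$; the clean fix is to observe that the ideal generated by all reduced-row entries is sandwiched between $\mathfrak R'$ (the rows at the lcm degrees do give exactly the special $S$-polynomials) and $\mathfrak R$, and then invoke $\mathfrak R=\mathfrak R'$ from Theorem~\ref{affine scheme}.
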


\begin{proof}
Let $a_m=dim_K J_m$. We consider in $A_m$ the $a_m \times a_m$ submatrix $\bar A_m$ whose columns correspond to the terms $x^\beta x^\alpha$ in $J_m$ and whose rows are given  by the polynomials $x^\beta F_\alpha$ that are minimal with respect to the partial order $>_m$. Up to a permutation of rows and columns, this submatrix is upper-triangular with 1 on the main diagonal because $x^\beta F_\alpha$ is minimal with respect to the partial order $>_m$ and because of Lemma \ref{reduction1}. We may also assume that the submatrix $\bar A_m$ corresponds to the first $a_m$ rows and columns in $A_m$. Then the ideal $\mathfrak{A}$ is generated by the determinants of the $a_m+1\times a_m+1$ sub-matrices containing $\bar A_m$.  Moreover the Gaussian row-reduction of $A_m$ with respect to the first $a_m$ rows is nothing else than the $\mathcal{V}_m$-reduction  of the $S$-polynomials  of the special type considered defining $\mathfrak{R}'$, because the first $a_m$ rows are made of the co!
 efficients of the polynomials of $\mathcal{V}_m$.
\end{proof}

The result of Lemma \ref{idealeU} shows that the construction of the ideal $\mathfrak R$ does not depend on the procedure of reduction. Now, we can give the following definition.

\begin{definition}
The affine scheme defined by the ideal $\mathfrak R=\mathfrak R'=\mathfrak A$ is called {\em $J$-marked scheme}.
\end{definition}

We will denote a $J$-marked scheme and a $J$-marked family by the same symbol $\cM(J)$ because we can identify every ideal $I$ with the corresponding specialization of the variables $C$, by the parameterization of the $J$-marked family on the $J$-marked scheme. We point out that the one-to-one correspondence between the $J$-marked family $\cM(J)$ and the set of projective schemes defined by the ideals of $\cM(J)$ is analogous to the identification of the points of a Hilbert scheme with the projective schemes these points represent.

\begin{remark}
A given homogeneous ideal $I$ belongs to $\cM(J)$ if and only $I$ has the same Hilbert function as $J$ and the affine scheme defined by the ideal of $K[C]$ generated by $\mathfrak R$ and by the coefficients of the $\mathcal V_m$-reductions of the generators of $I$ is not empty. 
Indeed, the ideal $I$ belongs to $\cM(J)$ if and only if it has the same Hilbert function of $J$ and there exists a specialization $\bar C$ in the $J$-marked scheme defined by $\mathfrak R$ such that every generator of $I$ belongs to the ideal $(\bar{\mathcal G})$ generated by the polynomials of $\mathcal G$ evaluated on $\bar C$. The generators of $I$ belong to $(\bar{\mathcal G})$ if and only if their $\mathcal V_m$-reductions evaluated on $\bar C$ become zero.
\end{remark} 

\begin{theorem}\label{homogeneous}
The $J$-marked scheme is homogeneous with respect to a non-standard grading $\lambda$ of $K[C]$ over the group $\mathbb Z^{n+1}$ given by $\lambda(C_{\alpha \gamma})=\alpha-\gamma$. 
\end{theorem}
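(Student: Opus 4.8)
The plan is to lift the grading $\lambda$ from $K[C]$ to the bigger polynomial ring $K[C][x_0,\dots,x_n]$ and then to check that the whole construction of $\mathfrak R$ (equivalently of $\mathfrak R'=\mathfrak A$) stays inside a single graded piece of that ring, so that the generators of $\mathfrak R$ extracted as coefficients of the $\mathcal V_m$-reductions are automatically $\lambda$-homogeneous; this makes $\mathfrak R$ a $\lambda$-homogeneous ideal and hence $\cS(J)$ a homogeneous scheme.

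First I would extend $\lambda$ by setting $\lambda(x_i):=e_i$, the $i$-th standard basis vector of $\mathbb Z^{n+1}$, while keeping $\lambda(C_{\alpha\gamma}):=\alpha-\gamma$; this turns $K[C][x_0,\dots,x_n]$ into a $\mathbb Z^{n+1}$-graded ring, and I identify $\lambda(x^\mu)$ with the exponent vector $\mu$. The crucial elementary remark is that every generic marked polynomial $F_\alpha=x^\alpha-\sum_\gamma C_{\alpha\gamma}x^\gamma$ is $\lambda$-homogeneous of multidegree $\alpha$, because $\lambda(C_{\alpha\gamma}x^\gamma)=(\alpha-\gamma)+\gamma=\alpha$. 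Consequently every product $x^\delta F_\alpha$ is $\lambda$-homogeneous of multidegree $\delta+\alpha$, and for each pair $x^\alpha,x^{\alpha'}\in B_J$ the $S$-polynomial $S(F_\alpha,F_{\alpha'})=x^\delta F_\alpha-x^{\delta'}F_{\alpha'}$, with $x^{\delta+\alpha}=x^{\delta'+\alpha'}=\mathrm{lcm}(x^\alpha,x^{\alpha'})$, is $\lambda$-homogeneous of the single multidegree $d_{\alpha\alpha'}:=\delta+\alpha$.

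Next I would verify that each step of the rewriting routine \textsc{NormalFormConstructor} preserves $\lambda$-homogeneity of a fixed multidegree. By Definition~\ref{def:$V_m$} and Remark~\ref{rem:minimum}, every element of $\mathcal V_m$ is of the form $x^\eta F_{\alpha''}$ with head term $x^{\eta+\alpha''}$, hence $\lambda$-homogeneous of multidegree equal to its head term. Moreover, if $h\in K[C][x_0,\dots,x_n]$ is $\lambda$-homogeneous of multidegree $d$, then the coefficient in $K[C]$ of any term $x^\mu$ of $h$ is $\lambda$-homogeneous of degree $d-\mu$, since every monomial $C^c x^\mu$ of $h$ satisfies $\lambda(C^c)=d-\mu$. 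Hence when \textsc{NormalFormConstructor}, applied to the current polynomial $h$, subtracts $a\cdot(x^\eta F_{\alpha''})$ with $a$ the coefficient of $x^{\eta+\alpha''}$ in $h$, the subtracted term $a\cdot x^\eta F_{\alpha''}$ is $\lambda$-homogeneous of multidegree $(d-(\eta+\alpha''))+(\eta+\alpha'')=d$, so the new $h$ is again $\lambda$-homogeneous of multidegree $d$ (or zero). Iterating, the $\mathcal V_m$-reduction $H_{\alpha\alpha'}=\overline{S(F_\alpha,F_{\alpha'})}$ is $\lambda$-homogeneous of multidegree $d_{\alpha\alpha'}$ with support (in the $x$'s) inside $\cN(J)$; writing $H_{\alpha\alpha'}=\sum_{x^\gamma\in\cN(J)}p_{\alpha\alpha'\gamma}(C)\,x^\gamma$, each coefficient $p_{\alpha\alpha'\gamma}$ is $\lambda$-homogeneous of degree $d_{\alpha\alpha'}-\gamma$. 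Since the $p_{\alpha\alpha'\gamma}$ are exactly the generators of $\mathfrak R$ produced by the construction, $\mathfrak R$ is generated by $\lambda$-homogeneous polynomials, hence is $\lambda$-homogeneous, which is the claim.

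The one genuinely delicate point is this middle step, i.e. making sure the rewriting never produces inhomogeneous contributions; it rests entirely on the elementary observation that the coefficient of a fixed term in a $\lambda$-homogeneous polynomial is itself $\lambda$-homogeneous, and everything else is bookkeeping. As a sanity check and alternative route, one can note that the torus $(K^\ast)^{n+1}$ acting on $S$ by $t\cdot x_i=t_i x_i$ permutes the members of $\cM(J)$ and acts on $K^N=\operatorname{Spec}K[C]$ through $t\cdot C_{\alpha\gamma}=t^{\alpha-\gamma}C_{\alpha\gamma}$, and the construction of $\mathfrak R$ is equivariant for this action, which is precisely the assertion that $\mathfrak R$ is $\lambda$-homogeneous; but the direct computation above is the shortest path.
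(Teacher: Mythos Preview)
Your proof is correct but follows a genuinely different route from the paper's. The paper works with the matrix description $\mathfrak A$: it shows directly that every minor of $A_m$ is $\lambda$-homogeneous by observing that the entry in row $x^\beta F_\alpha$ and column $x^\delta$ is (up to sign) $C_{\alpha\gamma}$ with $x^\delta=x^\beta x^\gamma$, and then computing that each monomial in the Leibniz expansion of a fixed minor has $\lambda$-degree $\sum_i(\alpha_i+\beta_i)-\sum_i\delta_{j_i}$, which depends only on the chosen rows and columns. You instead lift $\lambda$ to $K[C][x_0,\dots,x_n]$, check that every $F_\alpha$ and hence every $S$-polynomial is $\lambda$-homogeneous in this extended sense, and then verify that each step of \textsc{NormalFormConstructor} preserves $\lambda$-homogeneity, so that the coefficients defining $\mathfrak R$ are $\lambda$-homogeneous. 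Your argument is more conceptual and makes transparent \emph{why} the grading works (everything in sight is equivariant for the obvious torus action, as you note at the end), and it applies equally well to $\mathfrak R$, $\mathfrak R'$, or $\mathfrak A$ without invoking Lemma~\ref{idealeU}; the paper's minor computation is shorter and entirely self-contained, needing no bookkeeping through the reduction procedure, but it relies on having already established $\mathfrak R=\mathfrak A$.
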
 

\begin{proof}
To prove that the $J$-marked scheme is $\lambda$-homogeneous it is sufficient to show that every minor of $A_m$ is $\lambda$-homogeneous. Let us denote by $C_{\alpha \alpha}$ the coefficient ($=1$) of $x^\alpha$ in every polynomial $F_\alpha$: we can apply also to the \lq\lq symbol\rq\rq\ $C_{\alpha \alpha}$ the  definition of $\lambda$-degree of the variables $C_{\alpha \gamma}$,  because $\alpha-\alpha=0$ is indeed the $\lambda$-degree of the constant $1$. In this way, the  entry in the row $x^\beta F_\alpha$ and in the column $x^\delta$  is  $\pm C_{\alpha \gamma}$ if  $x^{\delta}=x^{\beta}x^{\gamma}$ and is 0 otherwise. 

Let us consider the minor of order $s$ determined in the matrix $A_m$ by the $s$ rows corresponding to $x^{\beta_i}F_{\alpha_i}$ and by the $s$ columns corresponding to $x^{\delta_{j_i}}$, $i=1,\ldots,s$. Every monomial that appears in the computation of such a minor is of type $\prod_{i=1}^s C_{\alpha_i \gamma_{j_i}}$ with  $x^{\delta_{j_i}}=x^{\beta_i}x^{\gamma_{j_i}}$. Then its degree is:
$$\sum_{i=1}^s \left( \alpha_i -\gamma_{j_i}\right) =\sum_{i=1}^s \left( \alpha_i -\delta_{j_i}+\beta_i\right) =\sum_{i=1}^s \left( \alpha_i +\beta_{i}\right) -\sum_{i=1}^s\delta_{j_i}$$
which only depends on the minor.
\end{proof}

Let $\prec$ be a term order and $\St(J,\prec)$ a so-called Gr\"obner stratum \cite{LR}, i.e. the affine scheme that parameterizes all the homogeneous ideals with initial ideal $J$ with respect to $\prec$. We can obtain $\St(J,\prec)$ as the section of $\cM(J)$ by the linear subspace $L$ determined by the ideal $\left(C_{\alpha \gamma} \ : \ x^\alpha \prec x^\gamma \right)\subset K[C]$. In particular, if $m_0$ is defined as in Remark \ref{solom0} and, for every $m\leq m_0$, $J_m$ is a $\prec$-segment, i.e. it is generated by the highest $\dim_K J_m$ monomials with respect to $\prec$, then $\St(J,\preceq)$ and $\cM(J)$ are the same affine scheme. In fact we can obtain both schemes using the same construction. Actually, for some strongly stable ideals $J$ we can find a suitable term ordering such that $\St(J,\prec)= \cM(J)$, but there are cases in which $\bigcup_{\prec}\St(J,\prec)$ is strictly contained in $\cM(J)$ (see the Appendix).

The existence of a term order such that $\cM(J)=\St(J,\preceq)$ has interesting consequences on the geometrical features of the affine scheme $\cM(J)$. In fact the $\lambda$-grading on $K[C]$ is positive if and only if such a term ordering exists and, in this case, we can isomorphically project $\cM(J)$ to the Zariski tangent space at the origin (see \cite{FR}). As a consequence of this projection we can prove, for instance, that the affine scheme $\cM(J)$ is  connected and that it is isomorphic to an affine space, provided the origin is a smooth point. If for a given ideal $J$ such a term ordering does not exist, then in general we cannot embed $\cM(J)$ in the Zariski tangent space at the origin (see the Appendix). However  we do not know examples of Borel ideals $J$ such that either $\cM(J)$ has more than one connected component or $J$ is smooth and $\cM(J)$ is not rational.

\vskip 2mm
Denote by $reg(I)$ the Castelnuovo-Mumford regularity of a homogeneous ideal $I$. 

\begin{proposition} \label{flat}
A $J$-marked family $\cM(J)$ is flat at the origin. In particular, for every ideal $I$ in $\cM(J)$, we get $reg(J)\geq reg(I)$. 
\end{proposition}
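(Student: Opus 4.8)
The plan is to exploit the $\lambda$-homogeneity of $\cS(J)$ established in Theorem~\ref{homogeneous} together with the description of $\cS(J)$ as the locus where the matrices $A_m$ drop rank. First I would reconstruct the universal family: over $K[C]/\mathfrak{R}$ one has the ideal $\widetilde I$ generated by the polynomials $F_\alpha$, and the quotient $S\otimes_K (K[C]/\mathfrak{R})$ modulo $\widetilde I$ is, by the very definition of $\mathfrak R=\mathfrak A$ (Lemma~\ref{idealeU}) and Corollary~\ref{characterization}, a free $K[C]/\mathfrak{R}$-module in each degree $m$, with basis $\cN(J)_m$. Freeness in each degree is exactly flatness of the family over $\cS(J)$; restricting to the origin (the point $C=0$, where $\widetilde I$ specializes to $J$) gives flatness of $\cM(J)$ at $J$. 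Concretely, I would argue that the $A_m$ have constant rank $\dim_K J_m$ along $\cS(J)$ — the submatrix identified in the proof of Lemma~\ref{idealeU} is upper triangular with $1$'s on the diagonal, so the rank is at least $\dim_K J_m$ everywhere, while the vanishing of the $(\dim_K J_m+1)$-minors (i.e. membership in $\mathfrak A$) forces it to be at most $\dim_K J_m$ on $\cS(J)$ — and constant rank of a matrix of functions yields a locally free, hence flat, cokernel.

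For the regularity bound, the key point is that the $\lambda$-grading, although non-standard and valued in $\mathbb{Z}^{n+1}$, refines the standard grading in the sense that each variable $C_{\alpha\gamma}$ has $\lambda$-degree $\alpha-\gamma$ with $|\alpha|=|\gamma|$, so the total-degree grading on $S$ is preserved and the generic fibre degenerates to the special fibre $J$ in a flat family parametrized by (a cone inside) $\cS(J)$ — or, more directly, one can pass to a one-parameter degeneration $C\mapsto tC$ using a suitable $\mathbb{G}_m$-action compatible with $\lambda$ being (eventually) positive on the relevant coordinates, exactly as in \cite{FR}. Then I would invoke the semicontinuity of Castelnuovo--Mumford regularity in flat families: in a flat family the regularity of the special fibre is an upper bound for the regularity of the general fibre. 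Since the special fibre is $J$ and an arbitrary $I\in\cM(J)$ appears as a fibre (indeed, via the $\lambda$-homogeneous structure, $I$ and $J$ lie in the same flat family connected by the torus action), we conclude $reg(I)\le reg(J)$.

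An alternative, more self-contained route to the regularity statement, which I would include as a backup, avoids semicontinuity machinery: one shows directly that $reg((G)_{\ge m})$ can only decrease, using Corollary~\ref{syzygies generators}, which says a set of liftings of homogeneous generators of $\mathrm{Syz}(J)$ generates $\mathrm{Syz}(G)$ — and iterating, one gets a graded free resolution of $S/(G)$ whose $i$-th syzygy module is generated in degrees $\le$ those of the $i$-th syzygies of $S/J$ (the lifting preserves the head term, hence the degree, of each syzygy, by Definition~\ref{def:lifting}). Comparing the two resolutions degree by degree gives $reg(S/I)\le reg(S/J)$, equivalently $reg(I)\le reg(J)$. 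The first approach ties the two assertions of the Proposition together cleanly; the second makes the regularity bound independent of flatness.

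The main obstacle I anticipate is making the flatness argument genuinely rigorous rather than slogan-level: one must be careful that "constant rank of $A_m$ on $\cS(J)$" really does give a locally free cokernel over the (possibly non-reduced) scheme $\cS(J)$, which requires checking the minors not just set-theoretically but that the ideal $\mathfrak A=\mathfrak R$ is precisely the Fitting ideal cutting out the rank drop — this is where Lemma~\ref{idealeU} does the heavy lifting, and I would lean on it explicitly. A secondary subtlety is the relationship between "flat in the origin" and a one-parameter family realizing $I\leadsto J$: when the $\lambda$-grading fails to be positive (as can happen, per the discussion before Proposition~\ref{flat}), the naive scaling $C\mapsto tC$ need not extend to $t=0$, so for the regularity bound the syzygy-lifting argument (Corollary~\ref{syzygies generators}) is the safer foundation and is the one I would ultimately write out.
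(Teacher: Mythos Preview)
Your flatness argument is correct but differs from the paper's. You show $(S\otimes_K K[C]/\mathfrak R)/\widetilde I$ is degreewise free over $K[C]/\mathfrak R$ by a constant-rank argument on the matrices $A_m$, leaning on Lemma~\ref{idealeU} to identify $\mathfrak R$ with the relevant Fitting ideal. The paper instead invokes Artin's deformation-theoretic criterion (\cite{A}, following \cite{BM}): the family is flat at $C=0$ if and only if every syzygy of the special fibre $J$ lifts to a syzygy of $\mathcal G$ over $(K[C]/\mathfrak R)[x_0,\dots,x_n]$, and Corollary~\ref{lifting of syzygies} (via Theorem~\ref{Buchberger}) supplies exactly those liftings. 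Your route avoids the deformation-theory input and in fact delivers flatness over all of $\cS(J)$ at once; the paper's route makes the role of the Buchberger-like criterion in flatness explicit, which is the thematic payoff of the section.

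For the regularity bound the paper does not use the $\lambda$-grading or a torus degeneration at all: it simply combines upper semicontinuity of regularity in flat families (\cite[III, Theorem~12.8]{H}) with the observation that the syzygy lifting of Corollary~\ref{lifting of syzygies} holds at \emph{every} point of $\cS(J)$, not only in a neighbourhood of the origin. Your backup route through Corollary~\ref{syzygies generators} is close in spirit, but the step ``iterating, one gets a graded free resolution of $S/(G)$ whose $i$-th syzygies are generated in degrees bounded by those of $S/J$'' is not justified by the results at hand: Corollary~\ref{syzygies generators} controls only the \emph{first} syzygy module of $G$, and that module is not itself a $J'$-marked set for any strongly stable $J'$, so the machinery of the paper does not re-apply verbatim to produce the higher syzygies. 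If you pursue that line you need a separate argument at that point; otherwise, once flatness is established, the paper's semicontinuity appeal is the shorter path.
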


\begin{proof}
Analogously to what is suggested in \cite{BM} and by referring to \cite[Corollary, section 3, part I]{A}, we know that $\cM(J)$ is a flat family at $J$, i.e. at the point $C=0$, if and only if every syzygy of $J$ lifts to a syzygy among the polynomials of $\mathcal G$ or, equivalently, the restrictions to $C=0$ of the syzygies of $\mathcal G$ generate the $S$-module of syzygies of $J$. By Corollary \ref{lifting of syzygies} we know that every syzygy of $J$ lifts to a syzygy of $G$, for every specialization of $C$ in the affine scheme defined by the ideal $\mathfrak R$. And this is true thanks to Theorem \ref{Buchberger} that allows also to lift a syzygy of $J$ to a syzygy of $\mathcal G$ over the ring $({K[C]}/{\mathfrak R})[x_0,\ldots,x_n]$. So, the first assertion holds.

For the second assertion, it is enough to recall that the Castelnuovo-Mumford regularity is upper semicontinous in flat families \cite[Theorem 12.8, Chapter III]{H} and that in our case the syzygies of $J$ lift to syzygies of $G$ for every specialization of the variables $C$ in the $J$-marked scheme, i.e., for every ideal $I$ of $\cM(J)$, not only in some neighborhood of $J$. 
\end{proof}

\section*{Appendix: an explicit computation}

Let $J$ be the strongly stable ideal $(z^4, z^3 y, z^2 y^2, z y^3, z^3 x, z^2 y x, z y^2 x, y^5)$ in $K[x,y,z]$ (where $z>y>x$ and $ch(K)=0$), already considered in Example \ref{noGBasis}. Note that for every term order we can find in degree $4$ a monomial in $J$ lower than a monomial in $\cN(J)$, because $zy^2x \succ z^2x^2$ and $zy^2x\succ y^4$ would be in contradiction with the equality $(zy^2x)^2 =z^2x^2\cdot y^4$. Hence, $J_4$ is not a segment (in the usual meaning) with respect to any term order.

The affine scheme $\cM(J)$ can be embedded as a locally closed subscheme in the Hilbert scheme of 8 points in the projective plane (see \cite{LR2}), which is irreducible smooth of dimension 16, and contains all the Gr\"obner strata $\St(J,\prec)$, for every $\prec$, and also some more point, for instance the one corresponding to the ideal $I$ of Example \ref{noGBasis}. 

Let $\mathcal G=\{F_1,\ldots,F_8\}\subset K[z,y,x,c_1,\ldots,c_{64}]$ where the polynomials $F_i$ are
$$F_1=z^4+c_1 x^2 z^2+c_2 y^4+c_3 x^2 y z + c_4 x y^3+c_5 x^3 z + c_6 x^2 y^2 + c_7 x^3 y + c_8 x^4,$$
$$F_2=z^3 y+c_9 x^2 z^2+c_{10} y^4+c_{11} x^2yz+c_{12}xy^3+c_{13} x^3 z+c_{14} x^2 y^2+c_{15} x^3 y+ c_{16} x^4,$$ 
$$F_3=z^2 y^2+c_{17} x^2 z^2+c_{18} y^4+c_{19} x^2 y z+c_{20} x y^3+c_{21} x^3 z+c_{22} x^2 y^2+c_{23} x^3 y+c_{24} x^4,$$ 
$$F_4= z y^3+c_{25} x^2 z^2+c_{26} y^4+c_{27} x^2 y z+c_{28} x y^3+c_{29} x^3 z+c_{30} x^2 y^2+c_{31} x^3 y+c_{32} x^4,$$
$$F_5=z^3x+c_{33} x^2 z^2+c_{34} y^4+c_{35} x^2 y z+c_{36} x y^3+c_{37} x^3 z+c_{38} x^2 y^2+c_{39} x^3y+c_{40} x^4,$$ 
$$F_6=z^2 y x+c_{41} x^2 z^2+c_{42} y^4+c_{43} x^2 y z+c_{44} x y^3+c_{45} x^3 z+c_{46} x^2 y^2+c_{47} x^3 y+c_{48} x^4,$$ 
$$F_7=z y^2 x+c_{49} x^2 z^2+c_{50} y_4+c_{51} x^2 y z+c_{52} x y^3+c_{53} x^3 z+c_{54} x^2 y^2+c_{55} x^3 y+c_{56} x^4,$$ 
$$F_8=y^5+c_{57} x^3 z^2 + c_{58} xy^4+c_{59} x^3 y z+c_{60} x^2 y^3+c_{61} x^4 z+c_{62} x^3 y^2+c_{63} x^4 y+c_{64} x^5.$$ 
By Maple 12 we compute the ideal $\mathfrak R'$ and the following ideal $I(T)$ that defines the Zariski tangent space $T$ to $\cM(J)$ at the origin; note that $T$ has dimension 16:
$$I(T)=
(c_{64}, c_{63}, c_{61}, c_{56}, c_{55}, c_{53}, c_{48}, c_{47}, c_{46}, c_{45}, c_{44}, c_{40}, c_{39}, c_{38}, c_{37}, c_{36}, c_{32}, c_{31}, c_{30}, c_{29},$$ $$c_{28}-c_{54}, c_{27},c_{26}-c_{52}, c_{25}, c_{24}, c_{23}, c_{22}, c_{21}, c_{20}, c_{19}, c_{18}, c_{17}, c_{16}, c_{15}, c_{14}, c_{13},$$ $$c_{12}, c_{11}, c_{10},c_9, c_8, c_7, c_6, c_5, c_4, c_3, c_2, c_1).$$
In the ideal $\mathfrak R'$ we eliminate several variables of type $C$ by applying \cite[Theorem 5.4]{LR2} and by substituting variables that appear only in the linear part of some polynomials of $\mathfrak R'$. We obtain that $\cM(J)$ can be isomorphically projected on a linear space $T'\simeq \Af^{19}$ containing $T$. In this embedding, $\cM(J)$ is the complete intersection of the following three hypersurfaces in $\Af^{19}$ of degrees $4$, $4$ and $8$, respectively:

$G_1=c_{41}^2 c_{49} c_{50}+c_{41} c_{49} c_{50} c_{51}+c_{41} c_{50}^2 c_{57}+c_{42} c_{49} c_{50} c_{57}+c_{43} c_{49}^2 c_{50}+c_{49}c_{50}^2 c_{59}$
$+c_{49} c_{50} c_{51}^2 +c_{50}^2 c_{51} c_{57}+c_{50}^2 c_{57} c_{58}-c_{41} c_{49} c_{52}-c_{49} c_{50} c_{53}-c_{49} c_{51} c_{52}$
$-2 c_{50} c_{52} c_{57}+
+c_{33} c_{49}-c_{41}^2 +c_{41} c_{51}-c_{42} c_{57}-c_{43} c_{49}+c_{49} c_{54}-c_{53},$

$G_2=c_{41} c_{42} c_{49} c_{50}+c_{42} c_{49} c_{50} c_{51}+c_{42} c_{49} c_{50} c_{58}+c_{42} c_{50}^2 c_{57}+c_{43} c_{49} c_{50}^2 +c_{50}^3 c_{59}+$ $c_{50}^2 c_{51}^2$
$+c_{50}^2 c_{51} c_{58}+c_{50}^2 c_{58}^2 -c_{42} c_{49} c_{52}-c_{44} c_{49} c_{50}-c_{50}^2 c_{53}-c_{50}^2 c_{60}-2 c_{50} c_{51} c_{52}$
$-2 c_{50} c_{52} c_{58}+c_{34} c_{49}-c_{41} c_{42}+c_{42} c_{51}-c_{42} c_{58}-c_{43} c_{50}+2 c_{50} c_{54}+c_{52}^2 +c_{44},$

$G_3\ = \ -c_{41}^3 c_{49}^3 c_{50}^2\ -c_{41}^2 c_{49}^3 c_{50}^2 c_{51}\ +c_{41}^2 c_{49}^3 c_{50}^2 c_{58} \ - \ 2 c_{41}^2 c_{49}^2 c_{50}^3 c_{57}+$
$c_{41} c_{42}^2 c_{49}^5\ +2 c_{41} c_{49}^3 c_{50}^2 c_{51} c_{58}\ +c_{41} c_{49}^3 c_{50}^2 c_{58}^2\ -2 c_{41} c_{49}^2 c_{50}^3 c_{51} c_{57}\ -c_{41} c_{49} c_{50}^4 c_{57}^2\ +c_{42}^2 c_{49}^5 c_{51}+$
$c_{42}^2 c_{49}^5 c_{58} \ -\ c_{49}^3 c_{50}^2 c_{51} c_{58}^2 \ -\ c_{49}^3 c_{50}^2 c_{58}^3\ +\ 2 c_{49}^2 c_{50}^3 c_{51} c_{57} c_{58}\ +$
$2 c_{49}^2 c_{50}^3 c_{57}c_{58}^2\ -\ c_{49} c_{50}^4 c_{51} c_{57}^2\ -\ c_{49} c_{50}^4 c_{57}^2 c_{58}\ +\ 2 c_{41}^2 c_{49}^3 c_{50} c_{52}\ -\ 2 c_{41} c_{42} c_{49}^4 c_{52}$ 
$\ - \ 4 c_{41}c_{49}^3 c_{50} c_{52} c_{58} + \ 4 c_{41} c_{49}^2 c_{50}^2 c_{52} c_{57} - \ 2 c_{42} c_{44} c_{49}^5\ - \ 2 c_{42} c_{49}^4 c_{50} c_{60}$
$-\ 2 c_{42} c_{49}^4 c_{51} c_{52}\ +\ 2 c_{49}^3 c_{50} c_{52} c_{58}^2\ -$ 
$4 c_{49}^2 c_{50}^2 c_{52} c_{57} c_{58}+2 c_{49} c_{50}^3 c_{52} c_{57}^2-2 c_{33} c_{41} c_{49}^3 c_{50} + 2 c_{33}c_{49}^3 c_{50} c_{58} - 2 c_{33} c_{49}^2 c_{50}^2 c_{57} + 2 c_{34} c_{41} c_{49}^4 -2 c_{34} c_{49}^4 c_{58}+$
$2 c_{34} c_{49}^3 c_{50} c_{57}+4 c_{41}^3c_{49}^2 c_{50}$
$-c_{41}^2 c_{42} c_{49}^3 -2 c_{41}^2 c_{49}^2 c_{50} c_{51}$ $-4 c_{41}^2 c_{49}^2 c_{50} c_{58}$
$+5 c_{41}^2 c_{49} c_{50}^2 c_{57}+3 c_{41} c_{42}c_{49}^3 c_{51}
+4 c_{41} c_{42} c_{49}^2 c_{50} c_{57}+3 c_{41} c_{43} c_{49}^3 c_{50}+c_{41} c_{49}^3 c_{52}^2 +c_{41} c_{49}^2 c_{50}^2 c_{59}
+c_{41} c_{49}c_{50}^2 c_{51} c_{57}+2 c_{41} c_{50}^3 c_{57}^2 +c_{42} c_{43} c_{49}^4 +2 c_{42} c_{49}^4 c_{54}+3 c_{42} c_{49}^3 c_{50} c_{59} + \ c_{42} c_{49}^3 c_{51}^2 - \ c_{42} c_{49}^3 c_{51} c_{58} + \ c_{42} c_{49}^3 c_{58}^2 -$ $2 c_{42} c_{49}^2 c_{50} c_{51} c_{57}-4 c_{42} c_{49}^2 c_{50} c_{57} c_{58}+2 c_{42} c_{49} c_{50}^2 c_{57}^2 -3 c_{43} c_{49}^3 c_{50} c_{58}+3 c_{43} c_{49}^2 c_{50}^2 c_{57}+2 c_{44} c_{49}^4 c_{52}+2 c_{49}^3 c_{50} c_{52} c_{60}+c_{49}^3 c_{51} c_{52}^2-c_{49}^3 c_{52}^2 c_{58}-c_{49}^2 c_{50}^2 c_{58} c_{59}-c_{49}^2 c_{50} c_{51}^3 -2 c_{49}^2 c_{50} c_{51}^2 c_{58}+c_{49} c_{50}^3 c_{57} c_{59}-c_{49} c_{50}^2c_{51}^2 c_{57}
-5 c_{49} c_{50}^2 c_{51} c_{57} c_{58}-3 c_{49} c_{50}^2 c_{57} c_{58}^2 +2 c_{50}^3 c_{51} c_{57}^2 +2 c_{50}^3 c_{57}^2 c_{58}-c_{41}c_{49}^2 c_{52}+c_{41} c_{44} c_{49}^3 +c_{41} c_{49}^2 c_{50} c_{60}+c_{41} c_{49}^2 c_{51} c_{52}+c_{41} c_{49}^2 c_{52} c_{58}-5 c_{41} c_{49} c_{50} c_{52} c_{57}-2 c_{42} c_{49}^3 c_{53}+c_{42} c_{49}^3 c_{60}+c_{42} c_{49}^2 c_{52} c_{57}
+c_{43} c_{49}^3 c_{52}-2 c_{44}c_{49}^3 c_{51}-c_{44} c_{49}^3 c_{58}-2 c_{49}^3 c_{52} c_{54}
+ \ c_{49}^2 c_{50} c_{51} c_{60} - \ c_{49}^2 c_{50} c_{52} c_{59}+ \ 2 c_{49}^2 c_{51}^2 c_{52}+ \
c_{49}^2 c_{51} c_{52} c_{58}+2 c_{49} c_{50}^2 c_{57} c_{60}+5 c_{49} c_{50} c_{51} c_{52} c_{57}+6 c_{49} c_{50} c_{52} c_{57} c_{58}-4 c_{50}^2 c_{52}c_{57}^2 +c_{33} c_{41} c_{49}^2 -2 c_{33} c_{49}^2 c_{51}-c_{33} c_{49}^2 c_{58}+c_{33} c_{49} c_{50} c_{57}-3 c_{34} c_{49}^2 c_{57}
-c_{35} c_{49}^3-2 c_{41}^3 c_{49}+2 c_{41}^2 c_{49} c_{51}+2 c_{41}^2 c_{49} c_{58}-3 c_{41}^2 c_{50} c_{57}-c_{41} c_{42} c_{49} c_{57}-2 c_{41} c_{43} c_{49}^2+c_{41} c_{49}^2 c_{54}-2 c_{41} c_{49} c_{50} c_{59}-3 c_{41} c_{49} c_{51}^2 +c_{41} c_{50} c_{51} c_{57}-c_{42} c_{49}^2 c_{59}-c_{42} c_{49} c_{51} c_{57}+3 c_{42} c_{49} c_{57} c_{58}-3 c_{42} c_{50} c_{57}^2 +2 c_{43} c_{49}^2 c_{58}-2 c_{43} c_{49} c_{50} c_{57}-c_{49}^2 c_{50} c_{62}-2 c_{49}^2 c_{51} c_{54}-c_{49}^2 c_{54} c_{58}-2 c_{49} c_{50} c_{51} c_{59}-c_{49} c_{50} c_{54} c_{57}-c_{49} c_{51}^3 +c_{49}c_{51}^2 c_{58}-2 c_{49} c_{52}^2 c_{57}-c_{50}^2 c_{57} c_{59}-c_{50} c_{51}^2 c_{57}-c_{41} c_{49} c_{60}+c_{41} c_{52} c_{57}-c_{44} c_{49} c_{57}+2 c_{49} c_{50} c_{61}+4 c_{49} c_{51} c_{53}-c_{49} c_{51} c_{60}+c_{49} c_{52} c_{59}-c_{50} c_{53} c_{57}+c_{51} c_{52} c_{57}+c_{33} c_{57}+c_{41} c_{59}+c_{49} c_{62}-c_{54} c_{57}-c_{61}.
$
\medskip
Among the generators of the corresponding Jacobian ideal we have the following minors  $D_i$ obtained by computing the derivatives of $G_1,G_2,G_3$ with respect to the sets of variables $A_i$, for $1\leq i\leq 5$:  
\vskip 1mm
$D1 =- (2 c_{49} c_{50}-1)(c_{49} c_{50}-1)(c_{49} c_{50}+1)$, \quad $A_1=\{c_{61},c_{44},c_{53}\}$;

$D_2=-(c_{49}c_{50}+1)(c_{49}c_{50}-1)^2c_{49}$,
\quad $A_2=\{c_{53},c_{44},c_{62}\}$;

$D_3=-c_{50}(2c_{49}c_{50}-1)(c_{49}c_{50}-1)$,
\quad $A_3=\{c_{43},c_{61},c_{53}\}$;

$D_4=c_{49}(c_{49}c_{50}-1)^2(2c_{49}c_{50}-1)$,
\quad $A_4=\{c_{43},c_{61},c_{44}\}$;

$D_5=(c_{49}c_{50}+1)c_{50}^2(2c_{49}c_{50}-1)$,
\quad $A_5=\{c_{53},c_{60},c_{61}\}$.

\vskip 1mm
\noindent The polynomials $D_i$ define the empty set, so that $\cM(J)$ is smooth as we expected and, in particular, $J$ corresponds to a smooth point on $\cM(J)$. Moreover, $\cM(J)$ has dimension $16$ but we claim that it cannot be isomorphically projected on $T$. Indeed, note that we can choose a set of $16$ variables that is complementary to the tangent space and that does not contain the variables $c_{53}, c_{44}, c_{61}$ which occur in the linear parts of the polynomials $G_i$. These variables appear also in other parts of the polynomials and their coefficients are $c_{49}c_{50}+1$, $c_{49}c_{50}-1$ and $2 c_{49} c_{50}-1$, respectively. 
If $\bar c\in \mathbb T$ is a point of the tangent space on which none of the coefficients vanishes, we obtain a unique point of $\cM(J)$ of which $\bar c$ is the projection on $T$.
If $\bar c\in T$ is a general point of the tangent space on which one of these coefficients vanishes, one can see that $\bar c$ is not the projection of any point of $\cM(J)$. Hence, the projection of $\cM(J)$ on $T$ does not coincide with the tangent space $T$, but only with an open set. However, this fact implies that $\cM(J)$ is rational, in particular irreducible.

We point out that the variables $c_{49}$ and $c_{50}$, that appear in the coefficients of the variables $c_{53}, c_{44}, c_{61}$, are the coefficients in the polynomial $F_7$ of the two terms $x^2 z^2$, $y^4$ whose behaviour prevents the ideal $J$ from being a segment. Indeed, in this case the affine scheme $\cM(J)$ is homogeneous with respect to a non-positive grading.


\providecommand{\bysame}{\leavevmode\hbox to3em{\hrulefill}\thinspace}
\providecommand{\MR}{\relax\ifhmode\unskip\space\fi MR }
\providecommand{\MRhref}[2]{%
  \href{http://www.ams.org/mathscinet-getitem?mr=#1}{#2}
}
\providecommand{\href}[2]{#2}

\end{document}